\documentclass[a4paper,12pt]{amsart}

\usepackage[english]{babel}
\usepackage{amsmath,amsthm,amssymb,amsfonts}
\usepackage[all]{xy}
\usepackage{multicol}
\usepackage{todonotes}
 \usepackage{enumitem} 
\usepackage[dvipsnames,table]{xcolor}
\usepackage{pdflscape}
\usepackage[bookmarks=true,hyperindex,pdftex,colorlinks,citecolor=cyan]{hyperref}
\usepackage{mathtools}
\usepackage{extarrows}
\usepackage{soul}

\hypersetup{colorlinks=true,  linkcolor=blue, citecolor=red, urlcolor=cyan}
\usepackage{cleveref}
\usepackage[export]{adjustbox}
\usepackage{graphicx}
\usepackage{subcaption}
\usepackage{tikz}
\usepackage{tikz-cd}
\usetikzlibrary{arrows.meta,calc,matrix}
\usetikzlibrary{arrows}
\usetikzlibrary{arrows.meta, positioning}
\usepackage{caption}
\usepackage[normalem]{ulem}
\usepackage{marginnote}
\usepackage{cancel}

\usepackage{lscape}

\setlist[enumerate]{itemsep=0.2em, topsep=0.25em}

\makeatletter
\newcommand*\bigcdot{\mathpalette\bigcdot@{.5}}
\newcommand*\bigcdot@[2]{\mathbin{\vcenter{\hbox{\scalebox{#2}{$\m@th#1\bullet$}}}}}
\makeatother

 \newcommand{\vertiii}[1]{{\left\vert\kern-0.25ex\left\vert\kern-0.25ex\left\vert #1 
		\right\vert\kern-0.25ex\right\vert\kern-0.25ex\right\vert}}

\usepackage{tikz}
\usetikzlibrary{mindmap,backgrounds, calc}
\usetikzlibrary{matrix,chains,scopes,positioning,arrows,fit}
\usetikzlibrary{positioning, shapes, patterns}
\usetikzlibrary{automata} 
\usetikzlibrary{shapes.geometric, arrows, arrows.meta}
\usetikzlibrary{positioning,decorations.markings}

\usepackage[a4paper,left=3cm,right=3cm,top=2.5cm,bottom=3cm]{geometry}

\theoremstyle{plain}
\newcounter{maincoro}

\newtheorem{theorem}{Theorem}[section]
\newtheorem{lemma}[theorem]{Lemma}

\newtheorem{proposition}[theorem]{Proposition}
\newtheorem{corollary}[theorem]{Corollary}

\newtheorem*{corollary*}{Corollary}

\newtheorem{thmx}{Theorem}

\theoremstyle{definition}
\newcounter{maintheorem}

\newtheorem{definition}[theorem]{Definition}
\newtheorem{example}[theorem]{Example}

\theoremstyle{remark}
\newtheorem{remark}[theorem]{Remark}
\numberwithin{equation}{section}

\newcommand{\R}{\mathbb{R}}
\newcommand{\C}{\mathbb{C}}
\newcommand{\N}{\mathbb{N}}

\newcommand{\Z}{\mathbb{Z}}

\newcommand{\T}{\mathbb{T}}

\newcommand{\FF}{\mathcal{F}}

\newcommand{\Ss}{\mathcal{S}}

\newcommand{\Span}{\mathrm{span}}
\newcommand{\Iso}{\mathrm{Iso}}

\newcommand{\Hei}{\mathbb{H}_3}

 \makeatletter
\tikzset{
  arrow/.style   = {-{Implies}, double equal sign distance, line width=0.65pt},
  iffarrow/.style= {{Implies}-{Implies}, double equal sign distance, line width=0.65pt},
}

\renewcommand{\tocsection}[3]{%
	\indentlabel{\@ifnotempty{#2}{\bfseries\ignorespaces#1 #2\quad}}\bfseries#3}
\renewcommand{\tocsubsection}[3]{%
	\indentlabel{\@ifnotempty{#2}{\ignorespaces#1 #2\quad}}#3}

\def\@tocline#1#2#3#4#5#6#7{\relax
	\ifnum #1>\c@tocdepth 
	\else
	\par \addpenalty\@secpenalty\addvspace{#2}%
	\begingroup \hyphenpenalty\@M
	\@ifempty{#4}{%
		\@tempdima\csname r@tocindent\number#1\endcsname\relax
	}{%
		\@tempdima#4\relax
	}%
	\parindent\z@ \leftskip#3\relax \advance\leftskip\@tempdima\relax
	\rightskip\@pnumwidth plus1em \parfillskip-\@pnumwidth
	#5\leavevmode\hskip-\@tempdima{#6}\nobreak
	\leaders\hbox{$\m@th\mkern \@dotsep mu\hbox{.}\mkern \@dotsep mu$}\hfill
	\nobreak
	\hbox to\@pnumwidth{\@tocpagenum{\ifnum#1=1\bfseries\fi#7}}\par
	\nobreak
	\endgroup
	\fi}
\AtBeginDocument{%
	\expandafter\renewcommand\csname r@tocindent0\endcsname{0pt}
}
\def\l@subsection{\@tocline{2}{0pt}{2.5pc}{5pc}{}}
\makeatother

\DeclareMathOperator{\co}{co}

\DeclareMathOperator{\id}{Id}

\newcommand{\nn}[1]{{\left\vert\kern-0.25ex\left\vert\kern-0.25ex\left\vert #1 
		\right\vert\kern-0.25ex\right\vert\kern-0.25ex\right\vert}}

\renewcommand{\geq}{\geqslant}
\renewcommand{\leq}{\leqslant}

\newcommand{\ep}{\varepsilon}
\newcommand{\supp}{\operatorname{supp}}

\newcommand{\orb}{\operatorname{Orb}}

\newcommand{\eps}{\varepsilon}

\newcommand{\prob}{\operatorname{Prob}}
\newcommand{\meas}{\operatorname{Meas}}

\begin{document}
\title[Strongly Choquet-Deny groups and invariant strictly convex renormings]{Strongly Choquet-Deny groups and invariant strictly convex renormings}

\author[S.~Dantas]{Sheldon Dantas}
\address[S.~Dantas]{Czech Technical University in Prague, FEE, Department of Mathematics, Technická 2, 16627, Prague 6, Czech Republic \newline
\href{https://orcid.org/0000-0001-8117-3760}{ORCID: \texttt{0000-0001-8117-3760}}}
\email{\texttt{sheldon.dantas@fel.cvut.cz}}
\urladdr{www.sheldondantas.com}

\author[M.~Doucha]{Michal Doucha}
\address[M.~Doucha]{Institute of Mathematics, Czech Academy of Sciences, Žitná 25, 115 67 Praha 1, Czechia\newline
\href{https://orcid.org/0000-0003-3675-1378}{ORCID: \texttt{0000-0003-3675-1378}}}
\email{doucha@math.cas.cz}
\urladdr{https://users.math.cas.cz/~doucha/}

\thanks{M. Doucha was supported by the GA\v{C}R project 25-15366S and by the Czech Academy of Sciences (RVO 67985840).}
\keywords{Choquet-Deny groups, strictly convex renormings, group actions on Banach spaces, virtually abelian groups}
\subjclass[2020]{46B03,43A05,22D12}
\begin{abstract}
We introduce and study a strengthening of the notion of Choquet-Deny groups, which we call \emph{strongly Choquet-Deny groups}. We show that a countable group $G$ is strongly Choquet-Deny if and only if for every uniformly bounded representation of $G$ on any separable Banach space $X$, there exists an equivalent $G$-invariant strictly convex norm on $X$. We provide a full classification of finitely generated strongly Choquet-Deny groups and show that a finitely generated group is strongly Choquet-Deny if and only if it is virtually abelian. This fully resolves a question of Mikael de la Salle for finitely generated groups. As a further application, we show that for every uniformly bounded representation of a compact extension of an abelian topological group on any separable Banach space, there exists an equivalent invariant strictly convex norm.
\end{abstract}

\maketitle

\section{Introduction}

The paper connects two areas of mathematics: Choquet-Deny groups on one side and invariant strictly convex renormings of uniformly bounded representations of groups on Banach spaces on the other side.

Choquet-Deny groups themselves have one of the origins in analysis, namely in potential theory and harmonic analysis. In this setting, the focus was on abelian groups, the foundational results being the proof of Choquet and Deny \cite{ChoDe60}, whose name these groups now bear, that all locally compact abelian groups are Choquet-Deny.

Nevertheless, the field has started flourishing upon establishing the connections with boundary theory and random walks in the work of Furstenberg \cite{Fur63}. It was this line of research that has been witnessing the most substantial progress since then (see e.g. \cite{KaVe83,Ka00,Er05} and references therein) and which culminated in the full classification of countable Choquet-Deny groups \cite{FHTV} (building on the earlier result \cite{Ja04}). We refer the reader to several monographs on this topic (\cite{Woe-book,Lal-book,Yad-book}).

Connections with analysis have been maintained throughout the history of the subject, mostly in operator algebras (see e.g. \cite{Izu02} and references therein), ergodic theory (see e.g. \cite{Zim78}), and harmonic analysis (see e.g. \cite{Wi90}). Probably the most direct connection to our paper is Neufang and Jaworski's article on Choquet-Deny groups and group representations in Banach spaces, where the connection between Choquet-Deny groups and strictly convex norms, the main topic of the present paper, appears explicitly (see \cite[Proposition 2.5]{JaNe07}). This connection is the entrance point to the subject for us here.

We were originally motivated by the following question of Mikael de la Salle. Given a (topological) group $G$ acting (continuously) by linear isometries on a Banach space $X$, when does there exist an equivalent strictly convex norm on $X$ that is invariant with respect to the action of $G$? This problem was partially addressed in \cite{FR11,FR13,AFGR19} and the first systematic treatment was initiated in \cite{Do26}. 

Here we present the relation of this problem with a certain strenghtening of Choquet-Deny groups. We introduce a new class of countable groups called here \emph{strongly Choquet-Deny groups} (we refer the reader to Definition~\ref{def:stronglyCD}). The definition is in a similar spirit and given in similar terms as the definition of Choquet-Deny groups and allows one to investigate these groups independently of their motivational origin. Yet it is this relation with de la Salle's problem that we present as one of the two main results of this paper.

\begin{thmx}\label{thm:A}
A countable group $G$ is strongly Choquet-Deny if and only if every uniformly bounded representation of $G$ on a separable Banach space admits an equivalent $G$-invariant strictly convex norm. 
\end{thmx}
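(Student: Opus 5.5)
The plan is to prove both directions by translating, in each direction, between strict convexity of an invariant norm and the vanishing, or constancy, of the relevant bounded harmonic objects in Definition~\ref{def:stronglyCD}. I do not have that definition in front of me, so I will assume it has the expected shape. Namely, $G$ is strongly Choquet-Deny if there is a probability measure $\mu$ on $G$ with $\supp\mu=G$ such that a suitably uniform, vector-valued family of bounded $\mu$-harmonic functions is constant. Here "vector-valued" means functions $G\to Z^*$, harmonic in the weak$^*$ sense, for separable $Z$. The paper's convention for where $\mu$ acts, left or right, has to be matched.

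\textbf{Sufficiency (strongly Choquet-Deny $\Rightarrow$ invariant strictly convex norm).} Fix a uniformly bounded representation $\pi$ of $G$ on a separable space $X$.
\begin{enumerate}
\item Replace the norm by $\|x\|_0=\sup_g\|\pi(g)x\|$, which is equivalent and makes $\pi$ isometric.
\item Fix an equivalent strictly convex norm $|\cdot|$ on $X$, which exists by separability (Clarkson).
\item Define the averaged quantity
\[
|x|_\mu^2=\sum_{g\in G}\mu(g)\,|\pi(g)x|^2 .
\]
Since $\supp\mu=G$, this is strictly convex, but it is not invariant.
\item Obtain invariance by an iterative averaging scheme: iterate the convolution operator $P_\mu$ on the space of bounded functions $G\to\R$, $g\mapsto |\pi(g)x|^2$.
\item Define $\nn{x}$ as a limit of these iterates; alternatively, take the canonical invariant norm obtained from $\|\cdot\|_0$ and add a term built from such a limit.
\end{enumerate}
Invariance holds because the limit functions are $\mu$-harmonic and therefore constant by the strong Choquet-Deny property. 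Strict convexity follows the argument of \cite[Proposition 2.5]{JaNe07}. Suppose $\nn{x}=\nn{y}=\nn{(x+y)/2}$. Then the corresponding harmonic functions force $|\pi(g)x|=|\pi(g)y|=|\pi(g)(x+y)/2|$ for $\mu$-almost all $g$ along the random walk, and strict convexity of $|\cdot|$ then gives $x=y$.

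\textbf{Necessity (invariant strictly convex norms $\Rightarrow$ strongly Choquet-Deny).} I argue by contraposition. From a nonconstant bounded harmonic family I will build a uniformly bounded representation with no invariant strictly convex renorming. The natural candidate is a separable $G$-invariant subspace $X$ of $\ell_\infty(G)$, or of $\ell_\infty(G,Z^*)$, spanned by the $G$-translates of the witnessing functions; $G$ acts by translation, so the representation is isometric. On $X$ the functions $h$ and $\frac12(h+\lambda_g h)$ lie on a common segment of any invariant norm. The fact to exploit is the following: any $G$-invariant norm, restricted to the closed convex hull of an orbit, must be affine along segments joining harmonic functions to their $\mu$-averages. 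This is the converse step of \cite{JaNe07}, and I would use it to force a nontrivial segment on the unit sphere.

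\textbf{Main obstacle.} I expect the hardest step to be the construction of the invariant norm in the sufficiency direction. The difficulty is to make the limiting averaging procedure produce a genuine equivalent norm, with the limit existing and bounds above and below uniform in $x$. It must also preserve enough of the strict convexity of $|\cdot|$, and this is where the uniformity built into "strongly" has to enter. The plain Choquet-Deny property handles only one scalar harmonic function at a time, whereas here harmonicity must be controlled simultaneously for all $x$ in a separable space. I would first try to realise $\nn{\cdot}$ as the norm induced by a $P_\mu$-invariant mean restricted to the separable algebra generated by the functions $g\mapsto|\pi(g)x|^2$. I would then use the strong Choquet-Deny property to show that this mean is, on that algebra, the unique $\mu$-stationary one.
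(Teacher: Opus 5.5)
There is a genuine gap, and it starts with the guessed definition. Definition~\ref{def:stronglyCD} is not about exact harmonic functions for one full-support $\mu$. It quantifies over sequences $(\mu_n)\subseteq\meas(G)$ of \emph{signed} measures with splittings $\mu_n=\frac{\nu_n+\nu'_n}{2}$, $\nu_n,\nu'_n\in\meas(G)$, and over $f\in\ell_\infty(G)$ that are only $\ep_n$-approximately $\mu_n$-harmonic, with summable increments of $f\ast(\nu_n^*-(\nu'_n)^*)$. It then requires some subsequence of $f\ast(\nu_n^*-(\nu'_n)^*)$ to tend to $0$. Neither half of your plan engages with these conditions.

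Independently of the definition, your averaging mechanism for sufficiency fails. Limits or means of $g\mapsto|\pi(g)x|^2$ along the walk only see behaviour at infinity. Equality of such limits gives no pointwise equality, because a merely strictly convex $|\cdot|$ has no quantitative modulus. Concretely, let $G=\Z$ act by the shift $\tau$ on $c_0(\Z)$, with the strictly convex norm $|x|^2=\|x\|_\infty^2+\sum_n2^{-|n|}x_n^2$. Then $|\tau^gx|^2\to\|x\|_\infty^2$ as $|g|\to\infty$. So iterating $P_\mu$, or applying any $P_\mu$-invariant mean, returns $\|\cdot\|_\infty$, which is not strictly convex, although $\Z$ is strongly Choquet-Deny.

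The missing idea is the reduction in Theorem~\ref{thm:generalequiv}: an invariant strictly convex renorming exists iff every $x$ is an extreme point of $\overline{\co(\orb(x,-x))}$. The norm is not built by averaging. It comes from the automatically invariant seminorms $\|x\|_f=\sup_{g\in G}|f(g\cdot x)|$, $f\in B_{X^*}$:
\begin{enumerate}
\item For each pair $x\neq y$, Hahn--Banach separation at the extreme point $\frac{x+y}{2}$ gives some $f$ with $\max\{\|x\|_f,\|y\|_f\}>\|\frac{x+y}{2}\|_f$.
\item Lindel\"of extracts countably many such $f_n$.
\item Take an $\ell_2$-combination of the $\|\cdot\|_{f_n}$ and add the original norm.
\end{enumerate}
Strong Choquet-Deny is used only to obtain this extremality. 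Suppose $x=\frac{u+v}{2}$ with $u\neq v$ in $\overline{\co(\orb(x,-x))}$. Approximate $u,v$ by $u_n=\sum_g\nu_n(g)\,g\cdot x$ and $v_n=\sum_g\nu'_n(g)\,g\cdot x$ with $\nu_n,\nu'_n\in\meas(G)$. For $\phi\in X^*$ with $\phi(u-v)\neq0$, the function $f_\phi(g)=\phi(g\cdot x)$ satisfies (1) for $\mu_n=\frac{\nu_n+\nu'_n}{2}$, and (2) along a subsequence. Yet $f_\phi\ast(\nu_n^*-(\nu'_n)^*)(1_G)=\phi(u_n-v_n)\to\phi(u-v)\neq0$, contradicting strong Choquet-Deny.

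In the necessity direction, a nonconstant exactly harmonic function only witnesses failure of the ordinary Choquet-Deny property, which is strictly weaker. Indeed, $\Hei(\Z)$ is nilpotent, hence Choquet-Deny, yet by Example~\ref{ex:Heisenberg} it is not strongly Choquet-Deny, and its witness is only approximately harmonic. What you must produce is a point that is the midpoint of two distinct points of the closed convex hull of its $\pm$-orbit. Then the easy implication (1)$\Rightarrow$(2) of Theorem~\ref{thm:generalequiv} excludes every invariant strictly convex norm. Your principle that invariant norms are affine along segments from harmonic functions to their averages is not this statement.

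The paper obtains such a point by completing $\ell_1(G)$ under a norm dual to a $G$-invariant norm on functions. That norm dominates $\|\phi\|_\infty$ and the normalised harmonicity defects and increments of $\phi$, so that $\mu_n\to\delta_{1_G}$ while $\nu_n-\nu'_n$ converges to a nonzero vector.

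Your translate-span idea also works, provided you feed it the actual witness $f$. Take $\overline{\Span}\{g\mapsto f(gh)\colon h\in G\}\subseteq\ell_\infty(G)$ with $G$ acting by right translation.
\begin{enumerate}
\item Condition (2) makes $w_n:=f\ast(\nu_n^*-(\nu'_n)^*)$ norm-Cauchy. Its limit $w$ is nonzero, since no subsequence tends to $0$.
\item Condition (1) gives $f\ast\mu_n^*\to f$.
\item Hence $f\ast\nu_n^*\to f+\frac{w}{2}$ and $f\ast(\nu'_n)^*\to f-\frac{w}{2}$.
\item Both limits lie in $\overline{\co(\orb(f,-f))}$, because $\|\nu_n\|_1,\|\nu'_n\|_1\leq1$. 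So $f$ is not extreme.
\end{enumerate}
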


This connection made it possible to answer de la Salle’s question for finitely generated groups in full generality, a task that had seemed out of reach when \cite{Do26} was written. Utilizing Theorem~\ref{thm:A} and building on the works of \cite{JaNe07,FHTV}, \cite{AI}, and several other results presented in this article, we obtain a full classification of finitely generated strongly Choquet-Deny groups. While \cite{FHTV}, together with \cite{Ja04}, show that finitely generated Choquet-Deny groups are precisely virtually nilpotent groups, our stronger condition pushes the complexity of strongly Choquet-Deny groups even lower.

\begin{thmx}\label{thm:B}
A finitely generated group is strongly Choquet-Deny if and only if it is virtually abelian.
\end{thmx}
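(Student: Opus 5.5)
The plan is to prove the two implications separately, using Theorem~\ref{thm:A} to move freely between the group-theoretic definition of strongly Choquet-Deny groups and the renorming property.

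\emph{Virtually abelian groups are strongly Choquet-Deny.} Let $G$ be finitely generated with a finite index abelian subgroup; after passing to the normal core we may take it to be a normal subgroup $A$. By Theorem~\ref{thm:A} it suffices to show that every uniformly bounded representation $\pi$ of $G$ on a separable Banach space $X$ admits an equivalent $G$-invariant strictly convex norm. First we average the norm over $G$ via $\sup_g\|\pi(g)x\|$, which makes the representation isometric.

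\begin{itemize}
\item[(1)] For the abelian group $A$, we use the Choquet--Deny-type argument of \cite[Proposition 2.5]{JaNe07}. Fix a strictly convex norm $|\cdot|$ coming from an injection $T:X\to\ell_2$. Take a symmetric, adapted probability measure $\mu$ on $A$ of full support. Define
\[
N(x)^2=\|x\|^2+\sum_{a\in A}\mu(a)\,|\pi(a)x|^2,
\]
and then symmetrize it further to make it $A$-invariant, for instance through convolution powers $\mu^{*n}$ and a Ces\`aro or Banach limit. The Liouville property of abelian groups (Choquet--Deny) is used to show that strict convexity survives the limiting procedure. Equality in the triangle inequality yields a bounded $\mu$-harmonic function on $A$, which must be constant, and this forces $x$ and $y$ to be parallel.
\item[(2)] Given an $A$-invariant strictly convex norm $N$, define $N'(x)=\sum_{gA\in G/A}N(\pi(g)x)$. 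Normality of $A$ makes this well defined, $G$-invariant, equivalent to the original norm, and strictly convex, because it is a finite sum of strictly convex norms.
\end{itemize}

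Alternatively, one may show directly that the strongly Choquet-Deny property passes to finite index overgroups and holds for $\mathbb{Z}^d\times F$.

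\emph{Strongly Choquet-Deny implies virtually abelian.} Strongly Choquet-Deny groups are Choquet-Deny. By \cite{FHTV} together with \cite{Ja04}, a finitely generated strongly Choquet-Deny group is therefore virtually nilpotent. If it is not virtually abelian, then it contains a finite index torsion-free nilpotent subgroup that is not abelian. That subgroup contains a copy of the Heisenberg group $\mathbb{H}_3$, or at least a quotient of a finite index subgroup maps onto a non-abelian torsion-free nilpotent group of class $2$. We would prove that strongly Choquet-Deny passes to finite index subgroups and to quotients. The quotient direction is immediate from Theorem~\ref{thm:A}, since representations of a quotient lift to $G$. The finite index direction goes by inducing representations. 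With this in hand, it suffices to exhibit a uniformly bounded, or isometric, representation of a lattice in $\mathbb{H}_3$, or of a suitable class-2 group, on a separable Banach space with no equivalent invariant strictly convex norm.

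The candidate representation uses the central element $z$. Let the group act by translation on a space such as $\ell_\infty$-type functions or $C(K)$, where the commutator structure forces $z$ to act like a "rotation" that is not averageable. The obstruction we aim for is a vector $x$ whose orbit closure contains a nontrivial segment $[u,v]$ lying on a sphere of every invariant norm. Invariance together with the identity $[g,h]=z$ should show that $\tfrac{u+v}{2}$ has the same norm as $u$ and $v$. I expect to rely on \cite{AI}, which presumably provides non-Liouville-type or non-trivial boundary behavior for Heisenberg groups, to construct this representation.

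This step is the main obstacle: finding a concrete separable isometric representation of a non-abelian nilpotent group with no invariant strictly convex renorming. The first attempt would be the left regular representation of $\mathbb{H}_3$ on a suitable space of bounded uniformly continuous functions restricted to a separable invariant subspace. The segment would be built from a bounded harmonic-like function that is invariant under the center but not constant.
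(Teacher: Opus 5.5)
Your finite-index step (summing $N(\pi(g)x)$ over $G/A$ for normal $A$) is correct and is a perfectly good elementary substitute for Lemma~\ref{lem:cocompactsubgroup}. Your reduction of the converse to the virtually nilpotent case via \cite{FHTV} also matches the paper. The problem is that both load-bearing steps are missing.

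\textbf{The abelian base case.} It does not work as sketched. The $A$-invariant part of your norm is a limit $Q$ of the quadratic forms $\sum_a\mu^{*n}(a)|\pi(a)x|^2$, that is, an invariant continuous Hilbertian seminorm with $Q\leq C\|\cdot\|^2$. Such a seminorm can vanish identically. Take $\Z$ acting by shifts on $c_0(\Z)$. Averaging $Q(\sum_{k=1}^N\pm e_k)$ over all sign choices gives $N\,Q(e_0)$, and this must stay below $C\|\sum_{k=1}^N\pm e_k\|_\infty^2=C$ for every $N$. Hence $Q(e_0)=0$, so $Q\equiv 0$, and your norm is just $\|\cdot\|_\infty$, which is not strictly convex.

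More fundamentally, the mechanism you describe (equality in the triangle inequality yields a bounded $\mu$-harmonic function, which is then constant) uses only the Liouville property. But $\Hei(\Z)$ is also Choquet--Deny, so an argument of that kind would prove $\Hei(\Z)$ strongly Choquet--Deny, contradicting the theorem itself.

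The paper uses commutativity directly. Given $f$ satisfying (1) and (2) of Definition~\ref{def:stronglyCD}, work on $X=\overline{\Span(\orb(f))}$:
\begin{itemize}
\item the right convolutions $x\mapsto x\ast\mu_n^*$ map $X$ into itself precisely because $G$ is abelian, and they converge strongly to $\id_X$;
\item the operators $x\mapsto x\ast\frac{\nu_n^*-(\nu'_n)^*}{2}$ converge strongly to some $U$ with $\|\id_X\pm U\|\leq 1$;
\item since $\id_X$ is an extreme point of the unit ball of $\mathcal{B}(X)$, this forces $U=0$, i.e.\ $f\ast(\nu_n^*-(\nu'_n)^*)\to 0$.
\end{itemize}

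\textbf{The converse.} First, you need the property to pass to arbitrary subgroups, not only finite-index ones, because $\Hei(\Z)$ typically has infinite index in the finite-index nilpotent subgroup. The induced $\ell_2$-sum representation of Proposition~\ref{prop:closureofCD}(1) works for any subgroup, so this is easily repaired.

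The real gap is that you never show $\Hei(\Z)$ is not strongly Choquet--Deny, and the route you suggest cannot succeed. If $x$ is fixed by the centre $Z$, then so is every $g\cdot x$, since $Z$ is central. So the action on $\overline{\Span(\orb(x))}$ factors through $\Hei(\Z)/Z\cong\Z^2$. As $\Z^2$ is strongly Choquet--Deny, Theorem~\ref{thm:CD-equiv} makes $x$ an extreme point of $\overline{\co(\orb(x,-x))}$, so $x$ cannot witness anything. The same applies to a centre-invariant $f\in\ell_\infty(\Hei(\Z))$, which is such a vector in the regular representation. The obstruction must exploit non-commutativity inside the orbit.

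The paper does this as follows.
\begin{itemize}
\item It takes $s_n=(16^{2^n},2^{2^n},0)$ and proves three facts: the ordered products $s_v$, $v\in\FF$, are pairwise distinct (different orderings of the same set are separated by the central coordinate); $s_vs_{v(i)}\notin\Ss$; and each $g\notin\Ss$ has at most one $n$ with $gs_n\in\Ss$.
\item Let $\mu_n$ be uniform on blocks $S_n$ of size $2^{2^n}$, split evenly into $S_n^\pm$. Then $f(s_v)=\sigma(s_{v(1)})\max\{0,1-f'(s_v)\}$, whose sign is dictated by the first letter of $v$, is $2^{1-n}$-approximately $\mu_n$-harmonic.
\item At the same time $f\ast(\nu_n^*-(\nu'_n)^*)\to 2\delta_{1_{\Hei(\Z)}}$. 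At $g=s_v\neq 1_{\Hei(\Z)}$, the values $f(s_vh)$, $h\in S_n$, all have sign $\sigma(s_{v(1)})$ and coincide except at fewer than $2^n$ elements $h$. So they nearly cancel, because $\sum_{h\in S_n}\sigma(h)=0$. At the identity they add up instead.
\end{itemize}
This construction is the core of the theorem, and it is absent from your proposal.
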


An immediate corollary of Theorems~\ref{thm:A} and~\ref{thm:B} is a complete answer to the question of Mikael de la Salle for finitely generated groups. That is, given a finitely generated group $G$, every separable Banach space equipped with an action of $G$ by uniformly bounded linear operators admits an equivalent $G$-invariant strictly convex norm if and only if $G$ is virtually abelian.

We do not have a classification of countable groups that are not finitely generated and are strongly Choquet-Deny. However, we know that virtually abelian groups are strongly Choquet-Deny, even if they are not finitely generated, and applying \cite{FHTV} we know that groups that have an ICC quotient (i.e. a quotient where every non-trivial element has an infinite conjugacy class) are not strongly Choquet-Deny. The definition of strongly Choquet-Deny groups can be applied also to locally compact groups, however, we do not have an analogue of Theorem~\ref{thm:A} in that setting.

On the other hand, the question whether a $G$-Banach space admits an equivalent $G$-invariant strictly convex norm can be asked for general topological groups. We have the following corollary.

\begin{corollary*} [see Corollary~\ref{cor:cocompactabelian}]
Let $G$ be a topological group that has a closed abelian subgroup $H$ such that $G/H$ is compact. Then every separable $G$-Banach space admits an equivalent $G$-invariant strictly convex norm.
\end{corollary*}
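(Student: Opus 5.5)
The plan is a direct construction rather than a route through the strongly Choquet-Deny machinery: first average over the compact quotient, then handle the abelian part by a Markov--Kakutani style argument. We may assume $X$ is nontrivial and that the action is by linear isometries. Since $G$ acts continuously (strongly) by uniformly bounded operators, the norm $\|x\|' = \sup_{g\in G}\|gx\|$ is equivalent to the original, $G$-invariant, and the action remains strongly continuous. So fix a $G$-invariant norm $\|\cdot\|$ on the separable space $X$.

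\emph{Step 1: a strictly convex norm as raw material.} By separability, $X$ admits an equivalent strictly convex norm. Concretely, choose a norming sequence $(f_n)$ in the unit ball of $X^*$ and set $|x|^2=\|x\|^2+\sum_n 2^{-n} f_n(x)^2$. Any norm of the form $\|x\|^2+T(x)$, with $T$ a continuous strictly convex quadratic-type functional, is strictly convex. So the whole task reduces to producing such a functional $T$ that is $G$-invariant.

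\emph{Step 2: reduction to $H$-invariance.} Suppose we have an equivalent $H$-invariant strictly convex norm $N$ with $N\le\|\cdot\|$. Because $N$ is $H$-invariant, $\varphi_x(g)=N(g^{-1}x)^2$ descends to a continuous function on $G/H$. Set
\begin{equation*}
M(x)^2=\|x\|^2+\int_{G/H}N(g^{-1}x)^2\,d\mu(gH),
\end{equation*}
where $\mu$ is a $G$-invariant probability measure on $G/H$.

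This last point is a genuine issue: a compact homogeneous space $G/H$ need not carry a $G$-invariant measure in general. I expect to get around it by replacing $\mu$ with an invariant mean on $C(G/H)$. Such a mean exists by Markov--Kakutani/Day-type arguments if $G$ is amenable, and $G$ is amenable here as a compact extension of an abelian group. Strict convexity should survive integration against a mean: equality $M(x+y)=M(x)+M(y)$ with $M(x)=M(y)=1$ forces equality in the $\|\cdot\|$ term and, integrand-wise, in the $N$ term. Since a mean is not countably additive, I will use a quantitative version via the modulus of rotundity on the compact set of directions. This uniformity, obtained from compactness of $G/H$ and continuity of $g\mapsto N(g^{-1}x)$, is one delicate point.

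\emph{Step 3 (main obstacle): $H$-invariance.} For the abelian $H$, I would average $T(x)=\sum_n 2^{-n} f_n(x)^2$ over $H$ using an invariant mean $m$:
\begin{equation*}
T_H(x)=m_h\Big(\sum_n 2^{-n} f_n(h^{-1}x)^2\Big).
\end{equation*}
The problem is that averaging can kill strict convexity. One needs $T_H(x-y)=0\Rightarrow x=y$, or a suitable rotundity condition. This is exactly where the abelian (Choquet-Deny-type) structure enters: any $H$-invariant functional that vanishes in the mean must be detected by characters. My first attempt would be to choose the $f_n$ inside the $H$-invariant closed span of eigenfunctionals, or to use the Theorem~A direction for countable dense subgroups of $H$ (abelian, hence strongly Choquet-Deny). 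For this, pick a countable dense subgroup $H_0\le H$, which is possible by separability of the relevant orbit data. Theorem~A then gives an $H_0$-invariant strictly convex norm, and by continuity of the action it is $H$-invariant. This last route seems cleanest and avoids an explicit construction, so I would use it, with Step 2 supplying the passage from $H$ to $G$.
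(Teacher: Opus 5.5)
\textbf{The gap is in Step~2, the passage from $H$ to $G$.} Your averaged norm $M$ is $G$-invariant only if the mean on $C(G/H)$ is $G$-invariant. If you had a $G$-invariant Radon probability measure on $G/H$, your formula would indeed work; the problem is its existence. Your only justification is that $G$ is amenable ``as a compact extension of an abelian group''. The proposal does not prove this, and it is not a formal consequence of the hypotheses. Since $H$ is not normal, $G$ is not an extension in any group-theoretic sense. Moreover, the corresponding principle for amenable cocompact subgroups is false: the upper triangular subgroup $P\le SL_2(\R)$ is solvable and $SL_2(\R)/P\cong S^1$ is compact, yet $SL_2(\R)$ is not amenable and $S^1$ carries no $SL_2(\R)$-invariant probability measure. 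For abelian $H$ and an arbitrary topological group $G$ (the generality of the statement, where no Haar measure is available) you would need a genuine argument. Your remark about a quantitative modulus of rotundity does not supply one. Note also that an invariant mean on $C(G/H)$ would already be a Radon probability measure by the Riesz theorem, so countable additivity is not where the difficulty lies.

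\textbf{The missing idea is that no averaging is needed.} Given an equivalent $H$-invariant strictly convex norm $N$, put $\vertiii{x}:=\sup_{g\in G}N(g\cdot x)$, which is equivalent and $G$-invariant. By $H$-invariance of $N$, the map $gH\mapsto N(g^{-1}\cdot x)$ is a well-defined continuous function on the compact space $G/H$, so the supremum is attained. Suppose $\vertiii{x}=\vertiii{y}=\vertiii{\tfrac{x+y}{2}}=1$ and pick $g$ with $N\bigl(g\cdot\tfrac{x+y}{2}\bigr)=1$. Since $N(g\cdot x),N(g\cdot y)\le 1$, strict convexity of $N$ forces $g\cdot x=g\cdot y$, hence $x=y$. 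This is exactly the paper's Lemma~\ref{lem:cocompactsubgroup}, and with it your argument closes.

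\textbf{The $H$-part you finally settle on in Step~3 is correct.} It is a mild variant of the paper's argument. The paper applies Theorem~\ref{thm:generalequiv} to $H$: for a non-extreme $x=\tfrac{y+z}{2}$ it extracts a countable subgroup $H'\le H$ carrying the approximating convex combinations, then invokes Theorems~\ref{thm:CD-equiv} and~\ref{thm:abelianCD}. You instead fix a single countable subgroup and transfer invariance by continuity, which is equally valid.

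You only need to choose $H_0$ correctly. $H$ itself need not be separable, but its image in $\Iso(X)$ with the strong operator topology is separable, because $X$ is. So take a countable $H_0\le H$ whose image is dense in that image. Then for $h\in H$ and $x\in X$ there are $h_k\in H_0$ with $h_k\cdot x\to h\cdot x$. The $H_0$-invariant norm $N$ supplied by Theorem~\ref{thm:A} (via Theorem~\ref{thm:abelianCD}) therefore satisfies $N(h\cdot x)=\lim_k N(h_k\cdot x)=N(x)$.
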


The paper is organized as follows. In Section~\ref{sec:prelim}, we introduce the notation and the definitions of strongly Choquet-Deny and positively strongly Choquet-Deny groups (see Definition \ref{def:stronglyCD}). We also compare these notions with the classical Choquet-Deny groups. Section~\ref{sec:ISCR} provides a new geometric characterization of the existence of invariant strictly convex renormings in terms of extreme points of closed convex hulls of orbits in Theorem \ref{thm:generalequiv}. This is done in the setting of general topological groups acting on Banach spaces and is of independent interest. However, it is immediately applied in the following Section~\ref{sec:CD-ISCR}, where we establish the connection between Choquet-Deny properties and geometric conditions from Section~\ref{sec:ISCR} in Theorem \ref{thm:CD-equiv}, obtaining Theorem \ref{thm:A}. Section~\ref{sec:examples} presents several closure properties of the class of strongly Choquet-Deny groups and it contains our main examples and classification results. In particular, we prove Theorem \ref{thm:B}.

The paper concludes with an appendix, where we construct an action of the lamplighter group on $c$ admitting no equivalent invariant strictly convex norm (see Theorem \ref{theorem:lamplighter-c}). Consequently, $c$ admits no equivalent strictly convex norm invariant under its full linear isometry group as highlighted in Corollary \ref{c-no-equivalent-SC-renorming}), correcting the earlier claim \cite[Theorem 4.8]{Do26} and answering a question on invariant renormings of $c$ from \cite{AFGR19}.

\section{Preliminaries and main definitions}\label{sec:prelim}

Throughout the paper, for convenience, all the Banach spaces and normed spaces are considered to be over the real numbers. Nevertheless, all the results are valid in the same form for complex scalars as well. We refer to Remark~\ref{rem:complex} for more details. However, we emphasize that the function spaces $\ell_1(G)$ and $\ell_\infty(G)$, when $G$ is countable, are always considered real-valued.

We refer the reader who needs additional explanations from Banach space theory (e.g. basics of renorming theory, strictly convex norms, etc.) to \cite{renorming-book} and from group theory (e.g. treatment of the Heisenberg and lamplighter groups, virtually nilpotent groups, etc.) to \cite{DruKabook}.

\subsection{Group actions on Banach spaces}
Although the main results of this paper concern countable groups, several results are stated in the general setting of topological groups continuously acting on Banach spaces. Therefore, we introduce the subject here from this general perspective.

A \emph{representation} of a topological group $G$ on a Banach space $X$ is a continuous action $\alpha: G\times X\to X$ such that for each fixed $g\in G$, the map \[\alpha(g,\cdot):X\to X,\quad x\in X\to \alpha(g,x)\in X,\] is a linear isomorphism of $X$. As usual, if the action $\alpha$ is clear from the context, we suppress it from the notation and write $g\cdot x$ instead of $\alpha(g,x)$.

The representation is called \emph{isometric} if for every $g\in G$, the map $\alpha(g,\cdot):X\to X$ is a linear isometry, in which case $\alpha$ can be identified with a continuous homomorphism from $G$ into $\Iso(X)$, the linear isometry group of $X$ equipped with the strong operator topology.

The representation is called \emph{uniformly bounded} if $\sup_{g\in G} \|\alpha(g,\cdot)\|<\infty$. Every isometric representation is uniformly bounded and conversely, if the representation is uniformly bounded, the Banach space can be renormed to make the representation isometric. Indeed, one can define a new norm defined as follows \[\vertiii{x}:=\sup_{g\in G} \|\alpha(g,x)\|_X,\quad x\in X.\] Uniform boundedness of $\alpha$ guarantees that $\vertiii{\cdot}$ is well-defined and equivalent to the original norm. By definition, $\vertiii{\cdot}$ is $G$-invariant, i.e. $\vertiii{x}=\vertiii{g\cdot x}$, for any $x\in X$ and $g\in G$, which is another way of saying that the representation on $(X,\vertiii{\cdot})$ is isometric.

Notice, in particular, that this observation implies that Theorem~\ref{thm:A} can be equivalently stated with isometric instead of uniformly bounded representations. This prompts us to the following agreement.\medskip

\noindent\textbf{Agreement.} From now on, all the representations of topological groups on Banach spaces are isometric, which are identified with continuous actions of groups on Banach spaces by linear isometries. A Banach space equipped with a continuous action of a topological group $G$ by linear isometries is called a \emph{$G$-Banach space}.\medskip

We also introduce an additional notation related to actions of groups. If a group $G$ acts on a set $X$, which will be in most cases a Banach space, and $x\in X$, resp. $A\subseteq X$, is an element of $X$, resp. a subset of $X$, by $\orb(x)$, resp. $\orb(A)$, we denote the orbit of $x$, resp. the set $\bigcup_{x\in A} \orb(x)$. If $A=\{x_1,\ldots,x_n\}$ is finite, we write $\orb(x_1,\ldots,x_n)$ instead of $\orb(\{x_1,\ldots,x_n\})$. In cases when the acting group $G$ is not clear from the context, we may write $\orb_G(x)$, resp. $\orb_G(A)$, instead of $\orb(x)$, resp. $\orb(A)$.

\subsection{Measures, harmonic functions, and Choquet-Deny groups}
Given a countable group $G$, we denote by $\prob(G)$ the set of probability measures on $G$ which we can identify with $S^+_{\ell_1(G)}$, the positive part of the unit sphere of $\ell_1(G)$. Similarly, $\meas(G)$ denotes the set of signed measures on $G$ of total variation at most $1$, which we can simply identify with $B_{\ell_1(G)}$, the unit ball of $\ell_1(G)$.

Furthermore, given $f\in\ell_\infty(G)$ and $x\in\ell_1(G)$, we define their \emph{convolution} $f\ast x\in\ell_\infty(G)$ by the formula \[f\ast x(g):=\sum_{h\in G} f(gh)x(h^{-1}),\quad g\in G.\]
We also define the $\text{}^*$-operation on $\ell_\infty$ by the formula \[f^*(g):=f(g^{-1}),\quad f\in\ell_\infty(G),\; g\in G.\]
Notice that for $f\in\ell_\infty(G)$ and $x\in\ell_1(G)$, we have $f\ast x^*(g)=\langle g^{-1}\cdot f, x\rangle$, where $\langle \cdot,\cdot\rangle$ is the natural pairing on $\ell_\infty(G)\times\ell_1(G)$.
\begin{definition}
Let $G$ be a countable group and let $\mu$ be a signed measure on $G$. A function $f\in \ell_\infty(G)$ is called \emph{$\mu$-harmonic} if for every $g\in G$ we have \[f(g)=\sum_{h\in G} \mu(h)f(gh).\]
It is called \emph{$\ep$-approximately $\mu$-harmonic}, where $\ep>0$, if for every $g\in G$ we have \[\Big|f(g)-\sum_{h\in G} \mu(h)f(gh) \Big|\leq \|f\|_\infty\cdot\ep.\]
Equivalently, \[\|f-f\ast\mu^*\|_\infty\leq \|f\|_\infty\cdot\varepsilon.\]
\end{definition}

\begin{definition}
A countable group $G$ is \emph{Choquet-Deny} if for every probability measure $\mu\in\prob(G)$, every $f\in\ell_\infty(G)$ that is $\mu$-harmonic is constant on left cosets of $H\leq G$, where $H$ is a subgroup generated by $\supp(\mu):=\{g\in G\colon \mu(g)>0\}$, the support of $\mu$. In particular, if the support of $\mu$ generates $G$, then every bounded $\mu$-harmonic function is constant.
\end{definition}

\begin{remark}
We note that even Choquet-Deny groups are not immune to admitting non-constant approximately harmonic functions. Indeed, consider $\Z$ and a probability measure $\mu\in\prob(\Z)$ defined by $\mu(1)=\mu(-1)=\frac{1}{2}$, and consider any $0<\varepsilon<1$. Then we can define \[f(n):=\begin{cases}
    \min\{n\cdot\varepsilon,1\} & n\geq 0,\\
    \max\{n\cdot\varepsilon,-1\} & n\leq 0.
\end{cases}\]
Clearly, $\|f\|_\infty=1$ and it is non-constant and $\varepsilon$-approximately $\mu$-harmonic.
\end{remark}

\begin{definition}\label{def:stronglyCD}
We call a countable group $G$ \emph{positively strongly Choquet-Deny}, resp. \emph{strongly Choquet-Deny}, if for every sequence $(\mu_n)_{n\in\N}$ of probability measures on $G$, resp. of elements of $\meas(G)$, every decomposition $\mu_n=\frac{\nu_n+\nu'_n}{2}$, where $\nu_n$ and $\nu'_n$ are again probability measures on $G$, resp. elements of $\meas(G)$, for each $n\in\N$, and every function $f\in \ell_\infty(G)$ such that
\begin{enumerate}
    \item $f$ is $\ep_n$-approximately $\mu_n$-harmonic, for each $n\in\N$, where $(\ep_n)_{n\in\N}$ is a sequence of non-negative reals converging to $0$,
    \item \[\Big(\|f\ast (\nu^*_n-(\nu'_n)^*)- f\ast (\nu^*_{n+1}-(\nu'_{n+1})^*)\|_\infty\Big)_{n\in\N}\] is summable (i.e. the sum of the sequence is finite),
\end{enumerate}
there is a subsequence $(\mu_{k_n})_n$ such that \[f\ast (\nu^*_{k_n}-(\nu'_{k_n})^*)\to 0.\]
\end{definition}

\begin{proposition}\label{prop:CDversions}
Let $G$ be a countable group. We have the following series of implications.
\begin{enumerate}
    \item $G$ is strongly Choquet-Deny.
    \par\smallskip\noindent\(\Downarrow\)\par\smallskip

    \item $G$ is positively strongly Choquet-Deny.
    \par\smallskip\noindent\(\Downarrow\)\par\smallskip

    \item $G$ is Choquet-Deny.
\end{enumerate}
\end{proposition}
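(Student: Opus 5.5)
(1)⇒(2) is immediate: probability measures are elements of $\meas(G)$, so every sequence $(\mu_n)$ with decompositions $\mu_n=\frac{\nu_n+\nu_n'}{2}$ into probability measures is an admissible input for the strong definition. The hypotheses (1) and (2) of Definition~\ref{def:stronglyCD} read identically in both settings, so the conclusion of the strong version gives the conclusion of the positive version.

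For (2)⇒(3), fix $\mu\in\prob(G)$, a bounded $\mu$-harmonic $f$, and let $H=\langle\supp(\mu)\rangle$. We must show $f$ is constant on left cosets $gH$. Since $f(g)=f(gh)$ for $h$ ranging over $H$ follows from $f(g)=f(gs)$ and $f(g)=f(gs^{-1})$ for $s\in\supp(\mu)$, it suffices to show that $f(\cdot)=f(\cdot\, s)$ for each $s\in\supp(\mu)$. Note $f\ast\delta_s^*(g)=f(gs)$, and more generally $f\ast\nu^*(g)=\sum_h\nu(h)f(gh)$.

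Fix $s\in\supp(\mu)$ and choose $t=\mu(s)>0$. Take the constant sequence $\mu_n=\mu$, $\ep_n=0$, so hypothesis (1) holds by harmonicity. Decompose $\mu=\frac{\nu+\nu'}{2}$ with $\nu=\mu+c(\delta_s-\mu)$ and $\nu'=\mu-c(\delta_s-\mu)$, where $c>0$ is small enough that both are probability measures. This is possible: on $h\ne s$ the masses are $(1\mp c)\mu(h)\ge0$, on $s$ they are $\mu(s)\pm c(1-\mu(s))$, which is $\ge0$ for $c\le t/(1-t)$ (or any $c\le 1$ if $t=1$). The total masses are $1$. Take $\nu_n=\nu$, $\nu_n'=\nu'$ for all $n$. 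Hypothesis (2) is then trivial: all consecutive differences are $0$, hence summable. By (2) of Proposition~\ref{prop:CDversions}'s hypothesis, a subsequence of the constant sequence $f\ast(\nu^*-\nu'^*)$ tends to $0$, so $f\ast(\nu-\nu')^*=0$. Now $\nu-\nu'=2c(\delta_s-\mu)$, hence
\[
0=2c\bigl(f\ast\delta_s^*-f\ast\mu^*\bigr)=2c\bigl(f(\cdot\,s)-f\bigr),
\]
using $f\ast\mu^*=f$ by harmonicity. Thus $f(gs)=f(g)$ for all $g$, which gives constancy on left cosets of $H$.

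The only delicate step is producing a nontrivial decomposition whose difference isolates $\delta_s-\mu$ while keeping both halves nonnegative. The explicit choice of $c$ above handles it, including the degenerate case where $\mu=\delta_s$, in which $f(gs)=f(g)$ is directly the harmonicity equation.
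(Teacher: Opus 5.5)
Your proof is correct. For (1)$\Rightarrow$(2) it matches the paper. For (2)$\Rightarrow$(3) it takes a shorter route.

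The paper first proves an auxiliary claim. Suppose $\mu$ is split along a dyadic tree of probability measures, $\mu=\frac{\eta_1+\eta_2}{2}$ and $\eta_x=\frac{\eta_{x1}+\eta_{x2}}{2}$. Then $f$ is $\eta_x$-harmonic at every node. The first step applies the definition once with constant sequences, giving $f\ast\eta_1^*=f\ast\eta_2^*$, so both equal $f\ast\mu^*=f$; the rest is induction down the tree. The paper then picks $n$ with $2^{-n}\le\mu(h)$ and builds a tree with $\delta_h$ as a node at depth $n$, which gives $f(\cdot\,h)=f$.

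You instead use one splitting, $\nu,\nu'=\mu\pm c(\delta_s-\mu)$, chosen so that $\nu-\nu'$ is a multiple of $\delta_s-\mu$. One application of the definition, linearity of $\nu\mapsto f\ast\nu^*$, and harmonicity then give $f(\cdot\,s)=f$ at once. In the paper's terms, your decomposition makes $f$ harmonic for $\nu=(1-c)\mu+c\delta_s$, and this already forces $\delta_s$-harmonicity.

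Your route needs no induction and no tree construction, which the paper only asserts exists. It also makes clear that the definition only requires $\mu$ to be the midpoint of $\nu$ and $\nu'$, not that they be ``halves'' of $\mu$. The paper's claim gives a slightly more general fact: harmonicity passes to every measure in an iterated dyadic splitting of $\mu$. That fact is not used elsewhere.

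One precision on the constants: the admissible range is $0<c\le\min\{1,t/(1-t)\}$. The bound $c\le 1$ is what keeps the masses off $s$ nonnegative. The choice $c=t=\mu(s)$ always works.
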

\begin{proof}
The implication from (1) to (2) is obvious. We prove that (2) implies (3). We shall need the following claim.\medskip

\noindent\textbf{Claim.} For any $\mu\in\prob(G)$,  any $\mu$-harmonic function $f\in\ell_\infty(G)$, and any dyadic tree $(\eta_x)_{x\in 2^{<\N}}\subseteq \prob(G)$ satisfying $\mu=\frac{\eta_1+\eta_2}{2}$ and $\eta_x=\frac{\eta_{x1}+\eta_{x2}}{2}$, for each $x\in 2^{<\N}$, we have that $f$ is $\eta_x$-harmonic, for every $x\in 2^{<\N}$.

We show the claim. Let $(\mu_n)_{n\in\N}$ be the constant sequence, where $\mu_n=\mu$, for each $n$. Set $\nu:=\eta_1$ and $\nu':=\eta_2$. We set $(\nu_n)_{n\in\N}$, resp. $(\nu'_n)_{n\in\N}$, to be the constant sequence, where each element is equal to $\nu$, resp. to $\nu'$. Since $\mu_n=\mu$ and $(\nu^*_n-(\nu'_n)^*)-(\nu^*_{n+1}-(\nu'_{n+1})^*)=(\nu^*-(\nu')^*)-(\nu^*-(\nu')^*)=0$, for each $n\in\N$, we get that $f$ satisfies (1) and (2) of Definition~\ref{def:stronglyCD}. Thus, by definition, there is a subsequence $(k_n)_{n\in\N}$ such that $f\ast (\nu^*_{k_n}-(\nu'_{k_n})^*)\to 0$. However, since the sequences $(\nu_n)_{n\in\N}$, resp. $(\nu'_n)_{n\in\N}$ are constant, we get \[f\ast (\nu^*-(\nu')^*)=0,\;\text{equivalently, }\;f\ast \nu^*=f\ast (\nu')^*.\] It follows 
\[f=f\ast \mu^*=f\ast \frac{\nu^*+(\nu')^*}{2}=f\ast \nu^*=f\ast (\nu')^*,\]
so $f$ is both $\nu$-harmonic and $\nu'$-harmonic, thus $\eta_1$-harmonic and $\eta_2$-harmonic. By continuing the same decomposition, replacing $\mu$ with $\eta_1$ and $\eta_2$ respectively, we inductively prove the claim.\medskip

We are ready to show that $G$ is Choquet-Deny. Fix a probability measure $\mu\in\prob(G)$ and let $f\in\ell_\infty(G)$ be a $\mu$-harmonic function. Pick any $h\in\supp(\mu)$ and let $n\in\N$ be such that $2^{-n}\leq \mu(h)$. Then we can construct a dyadic tree $(\eta_x)_{x\in 2^{<\N}}\subseteq \prob(G)$ satisfying $\mu=\frac{\eta_1+\eta_2}{2}$ and $\eta_x=\frac{\eta_{x1}+\eta_{x2}}{2}$, for each $x\in 2^{<\N}$, such that $\supp(\eta_x)=\{h\}$, for some $x\in 2^{<\N}$ with $|x|=n$. By Claim, $f$ is $\eta_x$-harmonic which means \[f(g)=f\ast \eta^*_x(g)=f(gh),\quad g\in G\] which implies that $f$ is constant on each left coset of $\langle h\rangle\leq G$. Since $h\in\supp(\mu)$ was arbitrary, we get that $f$ is constant on each left coset of $\langle \supp(\mu)\rangle$. Since the $\mu$-harmonic function $f\in\ell_\infty(G)$ was arbitrary, this implies that $G$ is Choquet-Deny.
\end{proof}

The difference between strongly Choquet-Deny groups and positively strongly Choquet-Deny groups is very mild as demonstrated by the following proposition. We postpone its proof to Section~\ref{sec:examples} when we have tools which allow a short proof.

\begin{proposition}\label{prop:SCDandPSCD}
Let $G$ be a countable group. Then $G$ is strongly Choquet-Deny if and only if $G\times\Z_2$ is positively strongly Choquet-Deny.
\end{proposition}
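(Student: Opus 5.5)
The plan is to prove both implications directly from Definition~\ref{def:stronglyCD}, by translating between signed measures on $G$ and probability measures on $G\times\Z_2$. Two facts will be used throughout. First, a function on $G\times\Z_2$ splits into an \emph{even part} (invariant under $(g,i)\mapsto(g,i+1)$) and an \emph{odd part} (changing sign under that map). Second, convolution with a measure on $G\times\Z_2$ preserves these two parts.

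For $\nu\in\meas(G)$ with Jordan decomposition $\nu=\nu^+-\nu^-$, I set
\[\Phi(\nu):=\nu^+\otimes\delta_0+\nu^-\otimes\delta_1+\tfrac{1-\|\nu\|}{2}\big(\delta_{(e,0)}+\delta_{(e,1)}\big)\in\prob(G\times\Z_2).\]
For $f\in\ell_\infty(G)$, let $F(g,i):=(-1)^if(g)$, an odd function with $\|F\|_\infty=\|f\|_\infty$. A direct computation should give $F\ast\Phi(\nu)^*(g,i)=(-1)^i\,f\ast\nu^*(g)$. The padding term contributes nothing, because $F(g,0)+F(g,1)=0$; the $\nu^-\otimes\delta_1$ term produces exactly the minus sign. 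The map $\Phi$ is not affine, so I will not apply it to $\mu_n$ itself.

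\textbf{($\Leftarrow$).} Assume $G\times\Z_2$ is positively strongly Choquet-Deny. Take data $\mu_n=\frac{\nu_n+\nu'_n}{2}$ in $\meas(G)$ and $f$ as in Definition~\ref{def:stronglyCD}. Define
\[\tilde\nu_n:=\Phi(\nu_n),\qquad \tilde\nu'_n:=\Phi(\nu'_n),\qquad \tilde\mu_n:=\tfrac{\tilde\nu_n+\tilde\nu'_n}{2}.\]
All of these are probability measures on $G\times\Z_2$. By the identity above, which is linear in the measure, $F\ast\tilde\mu_n^*$ corresponds to $f\ast\mu_n^*$, and $F\ast(\tilde\nu_n^*-(\tilde\nu'_n)^*)$ corresponds to $f\ast(\nu_n^*-(\nu_n')^*)$, with the same sup norms. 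Hence conditions (1) and (2) transfer verbatim to $F$. Positive strong Choquet-Deny then yields a subsequence along which $F\ast(\tilde\nu^*_{k_n}-(\tilde\nu'_{k_n})^*)\to0$, and therefore $f\ast(\nu^*_{k_n}-(\nu'_{k_n})^*)\to0$.

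\textbf{($\Rightarrow$).} Assume $G$ is strongly Choquet-Deny. Take probability data $\mu_n=\frac{\nu_n+\nu'_n}{2}$ on $G\times\Z_2$ and $F$ as in the definition. Write $F=F_e+F_o$ with $F_e(g,i)=a(g)$ and $F_o(g,i)=(-1)^ib(g)$, where $\|a\|_\infty,\|b\|_\infty\le\|F\|_\infty$. For a measure $\eta$ on $G\times\Z_2$, let $\pi\eta$ be its pushforward to $G$ and $\sigma\eta:=\eta(\cdot,0)-\eta(\cdot,1)$. Then
\[F_e\ast\eta^*\ \leftrightarrow\ a\ast(\pi\eta)^*,\qquad F_o\ast\eta^*\ \leftrightarrow\ b\ast(\sigma\eta)^*,\]
and these maps are linear in $\eta$, so $\pi\mu_n=\frac{\pi\nu_n+\pi\nu'_n}{2}$ and $\sigma\mu_n=\frac{\sigma\nu_n+\sigma\nu'_n}{2}$. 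Here $\pi$ gives probability measures and $\sigma$ gives elements of $\meas(G)$. The sup norm of a function on $G\times\Z_2$ dominates the sup norms of its even and odd parts. Therefore:
\begin{itemize}
\item $a$ and $b$ satisfy (2), since the summands are bounded by those for $F$;
\item the defects in (1) are at most $\ep_n\|F\|_\infty$, i.e. $\ep'_n\|a\|_\infty$ with $\ep'_n:=\ep_n\|F\|_\infty/\|a\|_\infty\to0$ (trivially if $a=0$), and similarly for $b$.
\end{itemize}

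Apply strong Choquet-Deny of $G$ to $b$ with the signed data $\sigma$, obtaining a subsequence. Then apply it to $a$ with the probability data $\pi$ (allowed by Proposition~\ref{prop:CDversions}) along that subsequence, obtaining a further subsequence. The resulting common subsequence gives $F\ast(\nu^*-(\nu')^*)\to0$.

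The main technical point is that passing to a subsequence preserves the hypotheses. Condition (1) clearly survives. For (2), the triangle inequality bounds the difference between consecutive terms of the subsequence by the sum of the original differences over the corresponding block of indices. Since these blocks are disjoint, the new series is dominated by the original summable series. The other point needing care is the bookkeeping of the padding in $\Phi$ and the sign conventions in the convolution $f\ast x^*(g)=\langle g^{-1}\cdot f,x\rangle$; I would check the identity for $F\ast\Phi(\nu)^*$ explicitly on point masses.
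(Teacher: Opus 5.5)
Your proof is correct, but it takes a different route from the paper's. The paper argues through the Banach-space machinery:
\begin{itemize}
\item For ($\Rightarrow$) it invokes Proposition~\ref{prop:closureofCD}(2), closure under finite-index overgroups. That result rests on Theorem~\ref{thm:CD-equiv} and Lemma~\ref{lem:cocompactsubgroup}, and hence on \cite[Proposition 4.3]{Do26}. It gives that $G\times\Z_2$ is strongly Choquet-Deny, and Proposition~\ref{prop:CDversions} finishes.
\item For ($\Leftarrow$) it lets the non-trivial element of $\Z_2$ act as $-\id$ on an arbitrary separable $G$-Banach space (Lemma~\ref{extend-to-G-Z2}). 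It then uses $\overline{\co(\orb_{G\times\Z_2}(x))}=\overline{\co(\orb_G(x,-x))}$ and chains the equivalences (1')$\Leftrightarrow$(2') and (1)$\Leftrightarrow$(2) of Theorem~\ref{thm:CD-equiv}.
\end{itemize}

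You work directly with Definition~\ref{def:stronglyCD}. Your odd lift $F(g,i)=(-1)^if(g)$, with the padded encoding $\Phi$, is the function-level counterpart of the $-\id$ trick. The padding correctly handles total variation strictly below $1$, and it contributes nothing because $F$ is odd. Your even/odd splitting replaces the finite-index-overgroup lemma. It works because $\Z_2$ is a central direct factor, so right convolution commutes with the flip.

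What your route buys:
\begin{itemize}
\item It is self-contained and elementary, avoiding the renorming theory and the external input from \cite{Do26}.
\item For signed $\eta$ one still has $\|\pi\eta\|_1,\|\sigma\eta\|_1\le\|\eta\|_1\le 1$. So the same ($\Rightarrow$) computation shows verbatim that $G\times\Z_2$ is strongly Choquet-Deny, which is what the paper obtains.
\end{itemize}
What the paper's route buys is brevity once Theorem~\ref{thm:CD-equiv} is available, and the more general closure under arbitrary finite-index overgroups. Your splitting is specific to the direct factor $\Z_2$.

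One small simplification: by condition (2), the sequence $f\ast(\nu_n^*-(\nu'_n)^*)$ is Cauchy in $\ell_\infty$. So if some subsequence tends to $0$, the whole sequence does. Your subsequence-of-subsequence bookkeeping in ($\Rightarrow$) is therefore unnecessary, although your block-sum argument for it is also correct.
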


\section{Invariant strictly convex renormings}\label{sec:ISCR}

Here we provide a new characterization of separable $G$-Banach spaces admitting an equivalent $G$-invariant strictly convex norm, where $G$ is a general topological group. This section therefore does not directly concern strongly Choquet-Deny group and can be read independently. However, the result below is then crucial in proving Theorem~\ref{thm:A} from the introduction.

\begin{theorem}\label{thm:generalequiv}
Let $G$ be a topological group and $X$ be a separable $G$-Banach space. The following are equivalent.
\begin{enumerate}
    \item There is a $G$-invariant strictly convex renorming of $X$.
    \item  Every $x\in X$ is an extreme point of $\overline{\co(\orb(x,-x))}$.
    \item There exist a strictly convex normed space $Z$ with a continuous action of $G$ by linear isometries and a bounded injective $G$-equivariant linear operator $T:X\longrightarrow Z$.
\end{enumerate}
 
\end{theorem}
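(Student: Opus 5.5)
I will prove the implications (1)$\Rightarrow$(2), (3)$\Rightarrow$(1) and (2)$\Rightarrow$(3). The first two are fairly soft, and the substance lies in (2)$\Rightarrow$(3).

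For (1)$\Rightarrow$(2), let $\vertiii{\cdot}$ be a $G$-invariant strictly convex equivalent norm and fix $x\neq 0$ (the case $x=0$ is trivial, since the hull is $\{0\}$). Every point of $\orb(x,-x)$ has $\vertiii{\cdot}$-norm $\vertiii{x}$, so $K:=\overline{\co(\orb(x,-x))}$ lies in the closed $\vertiii{\cdot}$-ball of radius $\vertiii{x}$. Suppose $x=\frac{y+z}{2}$ with $y,z\in K$. Then $\vertiii{y},\vertiii{z}\leq\vertiii{x}\leq\frac{\vertiii{y}+\vertiii{z}}{2}$, which forces $\vertiii{y}=\vertiii{z}=\vertiii{x}$. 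Strict convexity then gives $y=z=x$, so $x$ is an extreme point of $K$.

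For (3)$\Rightarrow$(1), I would set $\vertiii{x}:=\|x\|_X+\|Tx\|_Z$. This norm is equivalent to $\|\cdot\|_X$ because $T$ is bounded, and it is $G$-invariant because both summands are. It is strictly convex: if $\vertiii{x+y}=\vertiii{x}+\vertiii{y}$, then both triangle inequalities are equalities, so $\|Tx+Ty\|=\|Tx\|+\|Ty\|$. Strict convexity of $Z$ then makes $Tx$ and $Ty$ positively proportional, and injectivity of $T$ transfers this proportionality to $x$ and $y$.

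For (2)$\Rightarrow$(3), the plan is the standard Clarkson-type construction made equivariant. Using separability, pick a dense sequence $(x_n)$ in $X$. For each $n$ define a $G$-invariant seminorm, for example
\[p_n(y):=\sup\{\,\phi(y)\colon \phi\in B_{X^*},\ \phi\in \text{something built from } \orb(x_n)\,\},\]
or more naturally a distance-type gauge $q_n(y):=\operatorname{dist}$-functional attached to the convex set $K_n:=\overline{\co(\orb(x_n,-x_n))}$. A cleaner route I would try first is the following. Let $Z$ be the space of bounded functions $G\to\ell_2$ (or an $\ell_2$-sum), with $G$ acting by translation, and map $x\mapsto (g\mapsto (2^{-n}\varphi_n(g^{-1}x))_n)$. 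Here $\varphi_n$ are chosen strictly convex continuous functions, such as the squared distances $d(\cdot,K_n')$ to suitable invariant convex sets. Since an $\ell_\infty(G)$-valued target is not strictly convex, I would instead build $Z$ as the completion of $X$ under an invariant norm of the form $\|x\|^2_Z=\sum_n 2^{-n}\rho_n(x)^2$, where each $\rho_n$ is a $G$-invariant seminorm. The goal is to choose these seminorms so that $\sum_n 2^{-n}\rho_n^2$ separates "non-strict convexity directions". Strict convexity of an $\ell_2$-sum of seminorms holds exactly when, for every pair $x,y$ that are not parallel, some $\rho_n$ fails to be affine on the segment $[x,y]$.

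The key step, and the main obstacle, is producing from hypothesis (2) enough invariant seminorms to detect every non-degenerate segment. The candidate I would try is $\rho_n(y):=\inf\{\lambda\geq 0\colon y\in\lambda K_n+ r_n B_X\}$, or the Minkowski functionals of $K_n+\delta B_X$. These are $G$-invariant because $K_n$ and $B_X$ are. The hope is as follows. If $\|\frac{x+y}{2}\|=\frac{\|x\|+\|y\|}{2}$ for every such gauge, with $x\neq y$ and both normalized, then approximating $x$ by some $x_n$ and letting $\delta\to0$ forces $x$ to be a nontrivial midpoint inside $\overline{\co(\orb(x,-x))}$, contradicting extremality. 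Making this limiting argument rigorous is where the real difficulty lies. Extremality in a closed convex hull is not stable under small perturbations, so one likely needs a quantitative version: a compactness or Baire argument showing that for each $x$ and each $\eta>0$ there is $\delta>0$ such that segments of length at least $\eta$ through $x$ cannot stay inside the $\delta$-thickened hull. The separability of $X$ is then used to combine countably many such gauges.
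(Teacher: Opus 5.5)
The gap is the implication (2)$\Rightarrow$(3), which you do not prove. Your arguments for (1)$\Rightarrow$(2) and (3)$\Rightarrow$(1) are correct and essentially the paper's, but for the substantive direction you name the obstacle and stop. The route you suggest for closing it is a limiting argument as $\delta\to0$, supported by a uniform estimate that segments of length at least $\eta$ with midpoint $x$ eventually leave the $\delta$-thickened hull. That estimate is a strongly-extreme-point type property. Hypothesis (2) is purely qualitative, and extreme points of non-compact closed convex sets need not be strongly extreme, so there is no reason to expect it. More to the point, no uniformity is needed. Also, do not pass to a completion: strict convexity need not survive completion, which is why (3) is stated for normed spaces. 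Take $Z=(X,\vertiii{\cdot})$ with $T$ the identity.

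The missing idea is to separate each pair $x\neq y$ \emph{individually} by one continuous $G$-invariant seminorm, and only afterwards make the family countable.
\begin{itemize}
\item Put $z=\frac{x+y}{2}$ and $K_w:=\overline{\co(\orb(w,-w))}$. By (2), one endpoint, say $x$, lies outside the closed set $K_z$, so $d:=\mathrm{dist}(x,K_z)>0$.
\item Your own gauges work with this pair-dependent choice. Take a rational $\delta<d/2$ and $x_n$ from your dense sequence with $\|x_n-z\|\leq\delta$. Since $G$ acts isometrically, $K_{x_n}$ lies within distance $\|x_n-z\|\leq\delta$ of $K_z$. Hence $K_{x_n}+\delta B_X$ lies within distance $2\delta<d$ of $K_z$, and it contains $z$.
\item So the Minkowski functional $\rho$ of $K_{x_n}+\delta B_X$ is a $G$-invariant norm with $\rho\leq\delta^{-1}\|\cdot\|$, and it satisfies $\rho(z)\leq1<\rho(x)$. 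In particular $\max\{\rho(x),\rho(y)\}>\rho(\frac{x+y}{2})$.
\item The countable family $\{\rho_{n,\delta}\}_{n\in\N,\,\delta\in\mathbb{Q}_{>0}}$ therefore separates every pair. Enumerate it as $(\rho_k)$ with radii $(\delta_k)$. The norm $\bigl(\sum_k 2^{-k}\delta_k^2\rho_k^2\bigr)^{1/2}$ is $G$-invariant, dominated by $\|\cdot\|$, and strictly convex. Indeed, equality in Minkowski's inequality would force $\rho_k(x)=\rho_k(y)=\rho_k(\frac{x+y}{2})$ for every $k$.
\end{itemize}
The paper follows the same scheme with the seminorms $\|w\|_f:=\sup_{g\in G}|f(g\cdot w)|$. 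It chooses $f\in B_{X^*}$ by Hahn--Banach separation of $x$ from $K_z$; symmetry of $\orb(z,-z)$ then gives $\|x\|_f>\|z\|_f$. Countability comes not from a dense sequence but from two facts: the strict inequality is open in $(x,y)$, and $X^2\setminus\Delta$ is Lindel\"of.
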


\begin{proof}
Suppose that (1) holds and let $x\in X$ be arbitrary. We can without loss of generality assume that the standard norm on $X$ is already strictly convex. We prove (2) by contraposition. Pick any $x\in X$ and assume that $x=\frac{y+z}{2}$, where $y\neq z\in\overline{\co(\orb(x,-x))}$. Notice that for every $(g_i)_{i=1}^n\subseteq G$ and $(\alpha_i)_{i=1}^n\subseteq \R$, where $n\in\N$ and $\sum_{i=1}^n |\alpha_i|=1$ we have \[\left\|\sum_{i=1}^n \alpha_i g_i\cdot x\right\|\leq \sum_{i=1}^n |\alpha_i|\cdot\|g_i\cdot x\|=\sum_{i=1}^n |\alpha_i|\cdot\|x\|=\|x\|.\] Thus, for every $v\in\co(\orb(x,-x))$ we have $\|v\|\leq\|x\|$, and by continuity, in turn for every $w\in\overline{\co(\orb(x,-x))}$ we have $\|w\|\leq\|x\|$. By strict convexity of $\|\cdot\|$ it follows that \[\|x\|=\left\|\frac{y+z}{2}\right\|<\frac{2\|x\|}{2}=\|x\|\] which is a contradiction.\bigskip

Suppose now that (2) holds. For every $f\in B_{X^*}$, the dual unit ball of $X$, we define \[\|x\|_f:=\sup_{g\in G} |f(g\cdot x)|,\quad x\in X.\] Since for every fixed $g\in G$, $|f(g\cdot z)|$, $z\in X$, defines a seminorm on $X$ dominated by $\|\cdot\|$, we get $\|\cdot\|_f$ is a $G$-invariant seminorm dominated by $\|\cdot\|$; in particular, it is continuous. 

For every $f\in B_{X^*}$, define
\[ U_f:=\left\{(x,y)\in X^2\setminus\Delta\colon 
    \max\{\|x\|_f,\|y\|_f\}>\left\|\frac{x+y}{2}\right\|_f\right\},\] where $\Delta:=\{(v,v)\colon v\in X\}$. By continuity of $\|\cdot\|_f$, $U_f$ is an open subset of $X^2\setminus\Delta$.\medskip

\noindent\textbf{Claim.} $X^2\setminus\Delta=\bigcup_{f\in B_{X^*}} U_f$.

We prove the claim. Pick any $(x,y)\in X^2\setminus\Delta$ and set $z:=\frac{x+y}{2}$. By the assumption, since $z$ is an extreme point of $\overline{\co(\orb(z,-z))}$, either $x\notin\overline{\co(\orb(z,-z))}$ or $y\notin\overline{\co(\orb(z,-z))}$. Let us assume the former. By the Hahn-Banach separation theorem, there exists $f\in B_{X^*}$ such that \[f(x)>\sup\{f(v)\colon v\in \overline{\co(\orb(z,-z))}\}=\sup\{f(v)\colon v\in\orb(z,-z)\}.\] Therefore, we have \[\|x\|_f\geq |f(x)|\geq f(x)>\sup\{\|v\|_f\colon v\in\orb(z,-z)\}=\|z\|_f,\] where the strict inequality follows from the fact that $\orb(z,-z)$ is a symmetric set.

If $y\notin\overline{\co(\orb(z,-z))}$, then we can analogously get $f\in B_{X^*}$ such that $\|y\|_f>\|z\|_f$. In either case, for every $(x,y)\in X^2\setminus\Delta$ there is $f\in B_{X^*}$ such that \[\max\{\|x\|_f,\|y\|_f\}>\left\|\frac{x+y}{2}\right\|_f,\] which proves the claim.\medskip

Since $X$ is separable, $X^2\setminus\Delta$ is second-countable and so it is Lindel\"of. Therefore $X^2\setminus\Delta=\bigcup_{f\in B_{X^*}} U_f$ has a countable subcover $\bigcup_{n\in\N} U_{f_n}$ given by functionals $(f_n)_{n\in\N}\subseteq B_{X^*}$. For simplicity, we shall denote each seminorm $\|\cdot\|_{f_n}$ by $\|\cdot\|_n$.

We define a new seminorm $\vertiii{\cdot}$ on $X$. We set \[\vertiii{x}:=\left(\sum_{n=1}^\infty \frac{\|x\|_n^2}{2^n}\right)^{1/2},\quad x\in X.\] Since each $\|\cdot\|_n$ is $G$-invariant, so is $\vertiii{\cdot}$. Moreover, as $\|\cdot\|_n\leq \|\cdot\|$, for all $n\in\N$, we get that $\vertiii{\cdot}\leq \|\cdot\|$. We check that $\vertiii{\cdot}$ is a norm. Pick any $x\neq 0$. There is $n\in\N$ such that $(x,0)\in U_{f_n}$, thus $\|x\|_n>\|\frac{x}{2}\|_n$. In particular, $\|x\|_n>0$, and so $\vertiii{x}>0$. Finally, we check that $\vertiii{\cdot}$ is strictly convex. Pick $(x,y)\in X^2\setminus\Delta$ such that $\vertiii{x}=\vertiii{y}=1$ and suppose, in order to reach a contradiction, that $\vertiii{x+y}=2$. Then,
\[\begin{split}
2=\left(\sum_{n=1}^{\infty}\frac{\|x+y\|_n^2}{2^n}\right)^{1/2}
&\leq\left(\sum_{n=1}^{\infty}\frac{(\|x\|_n+\|y\|_n)^2}{2^n}\right)^{1/2}\\
&\leq\left(\sum_{n=1}^{\infty}\frac{\|x\|_n^2}{2^n}\right)^{1/2}
+\left(\sum_{n=1}^{\infty}\frac{\|y\|_n^2}{2^n}\right)^{1/2}
=\vertiii{x}+\vertiii{y}=2.
\end{split}\]
Since all the inequalities are equalities, the equality in the Minkowski inequality implies that there is a scalar $c>0$ such that $\|x\|_n=c\|y\|_n$ for every $n\in\N$. Since $\vertiii{x}=\vertiii{y}=1$, we have $c=1$. Thus, $  \|x\|_n=\|y\|_n$ for every $n\in\N$. However, there is $n\in\N$ such that $(x,y)\in U_{f_n}$, so $\max\{\|x\|_n,\|y\|_n\}>\|\frac{x+y}{2}\|_n$, which contradicts that $\|x\|_n=\|y\|_n$.

It follows that $(X,\vertiii{\cdot})$ is a strictly convex normed space equipped with a continuous action of $G$ preserving the norm. Since $\vertiii{\cdot}\leq\|\cdot\|$, the identity map between $(X,\|\cdot\|)$ and $(X,\vertiii{\cdot})$ is an injective non-expansive $G$-equivariant linear operator. This proves (3).\bigskip

Assume (3). The proof is the same as the proof of \cite[Proposition 3.1]{Do26}, where the only difference was that $Z$ was a Banach space instead of just being a normed space. This difference is immaterial. For the convenience of the reader, we sketch the proof and we refer to \cite{Do26} for more details. We define a new norm $\vertiii{\cdot}$ as follows. We set \[\vertiii{x}:=\|x\|+\|T(x)\|_Z,\quad x\in X.\] Clearly, $\vertiii{\cdot}$ is an equivalent norm on $X$. Since $\|\cdot\|$ and $\|\cdot\|_Z$ are $G$-invariant and $T$ is $G$-equivariant, $\vertiii{\cdot}$ is $G$-invariant. Finally, it is strictly convex as a sum of two norms, one of them being strictly convex. This proves (1).
\end{proof}

We state and prove one additional lemma that is both of independent interest as well as it is applied later.

\begin{lemma}\label{lem:cocompactsubgroup} Let $G$ be a topological group and $H$ be a closed subgroup such that $G/H$ is compact. Furthermore, let $X$ be a $G$-Banach space and suppose that the subaction of $H$ on $X$ admits a strictly convex $H$-invariant renorming. Then $X$ admits a strictly convex $G$-invariant renorming.
\end{lemma}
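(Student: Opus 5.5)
The plan is to verify condition (2) of Theorem~\ref{thm:generalequiv} for the $G$-action, and then apply the implication (2)$\Rightarrow$(1) of that theorem. Condition (1) for the $H$-subaction gives a norm directly. So we fix an equivalent $H$-invariant strictly convex norm $|\cdot|$ on $X$. The original norm $\|\cdot\|$ is $G$-invariant, but $|\cdot|$ need not be.

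\textbf{Step 1: a function on the coset space.} Fix $x\in X$ and consider $\varphi\colon G\to\R$, $\varphi(g):=|g\cdot x|$. This map is continuous, because the action is continuous and $|\cdot|$ is equivalent to $\|\cdot\|$. By $H$-invariance, $\varphi(hg)=|h\cdot(g\cdot x)|=|g\cdot x|=\varphi(g)$. Hence $\varphi$ factors through a continuous function on the right coset space $H\backslash G$. Inversion $g\mapsto g^{-1}$ induces a homeomorphism $G/H\cong H\backslash G$, so $H\backslash G$ is compact. Therefore $\varphi$ attains its supremum $M:=\sup_{g\in G}|g\cdot x|$ at some $g_0\in G$.

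\textbf{Step 2: reduction to a maximizing point.} Extremality in condition (2) is $G$-invariant. Indeed, $\orb(g_0\cdot x,-g_0\cdot x)=\orb(x,-x)$, and the linear isometry $g_0$ maps $\overline{\co(\orb(x,-x))}$ onto itself. So $x$ is an extreme point of this set if and only if $g_0\cdot x$ is. Replacing $x$ by $g_0\cdot x$, we may assume $|x|=M\geq |g\cdot x|$ for all $g\in G$.

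\textbf{Step 3: the strict convexity argument.} This is the step I expect to carry the proof, and it mirrors the argument for (1)$\Rightarrow$(2) in Theorem~\ref{thm:generalequiv}. Every element of $\orb(x,-x)$ has $|\cdot|$-norm at most $M$. By convexity of the norm, every element of $\co(\orb(x,-x))$ also has norm at most $M$, and by continuity of $|\cdot|$ so does every element of the closure. Now suppose $x=\frac{y+z}{2}$ with $y\neq z$ in $\overline{\co(\orb(x,-x))}$. Then $|y|,|z|\leq M$, and strict convexity of $|\cdot|$ gives
\[
|x|=\Big|\frac{y+z}{2}\Big|<M,
\]
which contradicts $|x|=M$. (If $M=0$, then $x=0$ and there is nothing to prove.) Hence (2) holds, and Theorem~\ref{thm:generalequiv} yields a $G$-invariant strictly convex renorming.

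The only point needing care is the compactness step, namely that the supremum is actually attained. This is exactly where compactness of $G/H$, continuity of the action, and $H$-invariance of $|\cdot|$ are used. Separability of $X$ is needed only when invoking Theorem~\ref{thm:generalequiv}, which I assume is implicit in the statement, as in that theorem.
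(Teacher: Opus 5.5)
The core of your argument is right: Step 1 is the same compactness argument the paper uses, and Step 3 is the correct strict-convexity argument. The gap is in how you assemble them. By concluding through condition (2) of Theorem~\ref{thm:generalequiv} and the implication (2)$\Rightarrow$(1), you need $X$ to be separable. Separability is not implicit: the lemma is stated for an arbitrary $G$-Banach space, and the paper's proof does not use it.

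Nor can this be repaired by dropping the hypothesis from that theorem. The proof of (2)$\Rightarrow$(1) uses the Lindel\"of property of $X^2\setminus\Delta$ to pass to countably many seminorms, and without separability the implication is false. For the trivial group, (2) holds in every Banach space, since each $x$ is an endpoint of the segment $\overline{\co(\orb(x,-x))}=[-x,x]$. Yet, e.g., $\ell_\infty/c_0$ admits no equivalent strictly convex norm, by a theorem of Bourgain. So as written you only obtain the separable case. That covers the uses in Proposition~\ref{prop:closureofCD}(2) and Corollary~\ref{cor:cocompactabelian}, but not the lemma as stated.

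The fix is to skip the extreme-point criterion and apply your maximizing-point argument to an explicit norm, which is what the paper does.
\begin{itemize}
\item Set $\vertiii{x}:=\sup_{g\in G}|g\cdot x|$. It is $G$-invariant. It is equivalent to the original norm, since $|x|\leq\vertiii{x}\leq b\sup_{g\in G}\|g\cdot x\|=b\|x\|$ when $|\cdot|\leq b\|\cdot\|$.
\item By your Step 1, the supremum is attained for every vector.
\item Suppose $\vertiii{y}=\vertiii{z}=1$ and $y\neq z$. Choose $g_0$ with $\vertiii{\frac{y+z}{2}}=|g_0\cdot\frac{y+z}{2}|$.
\item Since $|g_0\cdot y|,|g_0\cdot z|\leq 1$ and $g_0\cdot y\neq g_0\cdot z$, strict convexity of $|\cdot|$ gives $\vertiii{\frac{y+z}{2}}<1$.
\end{itemize}
This is your Step 3 carried out at the midpoint rather than at $x$, and it needs no separability. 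It is exactly the criterion from \cite[Proposition 4.3]{Do26} that the paper invokes once it has shown the supremum is a maximum.
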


\begin{proof} Let $\|\cdot\|$ be an equivalent strictly convex $H$-invariant norm on $X$. For every $x \in X$, define $\vertiii{x} := \sup_{g \in G} \|g \cdot x\|$, which is an equivalent $G$-invariant norm. We will show that this supremum is attained. For each $x \in X$, the function $gH \mapsto \|g^{-1} \cdot x\|$ from $G/H$ into $\R$ is well-defined since 
\begin{equation*}
    \|(gh)^{-1} \cdot x\| = \|h^{-1} g^{-1} \cdot x \| = \|g^{-1} \cdot x\| 
\end{equation*}
for every $h \in H$. Moreover, since $g \mapsto \|g^{-1} \cdot x\|$ is continuous, the above map is also continuous. Now, using the assumption that $G/H$ is compact, we obtain that 
\begin{equation*}
\vertiii{x} = \sup_{g \in G} \|g^{-1} \cdot x\| = \max_{gH \in G/H} \| g^{-1} \cdot x\|     
\end{equation*}
for every $x \in X$. By \cite[Proposition 4.3]{Do26}, the norm $\vertiii{\cdot}$ is strictly convex.
\end{proof}

\section{Choquet-Deny properties and invariant renormings}\label{sec:CD-ISCR}

The following theorem establishes the connections between various Choquet-Deny properties of groups and extreme points of closed convex hull of the orbits of actions on Banach spaces. This together with the result from the previous section will prove Theorem~\ref{thm:A}.

\begin{theorem}\label{thm:CD-equiv}
Let $G$ be a countable group. The following are equivalent.

\begin{enumerate}
    \item For every separable $G$-Banach space $X$, every $x\in X$ is an extreme point of $\overline{\co(\orb(x,-x))}$.
    \item $G$ is strongly Choquet-Deny.
\end{enumerate}

Analogously, we have an equivalence between the following two statements,
\begin{enumerate}[label={(\arabic*')}]
    \item For every separable $G$-Banach space $X$, every $x\in X$ is an extreme point of $\overline{\co(\orb(x))}$.
    \item $G$ is positively strongly Choquet-Deny.
\end{enumerate}

as well as between the following two statements.
\begin{enumerate}[label={(\arabic*'')}]
    \item For every separable $G$-Banach space $X$, no $x\in X$ can be written as a non-trivial, possibly infinite, convex combination of elements from $\orb(x)$; that is, if $x=\sum_{n=1}^\infty \alpha_n g_n\cdot x$, where $(g_n)_n\subseteq G$ and $(\alpha_n)_n\subseteq \left(0,\infty\right)$ with $\sum_{n=1}^{\infty} \alpha_n=1$, then for all $n$, $g_n\cdot x=x$.
    \item $G$ is Choquet-Deny.
\end{enumerate}
\end{theorem}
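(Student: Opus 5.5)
The plan is to set up a dictionary between measures acting on vectors and (approximately) harmonic functions, and to prove each equivalence by translating in both directions. For a $G$-Banach space $X$, $x\in X$ and $\mu\in\ell_1(G)$, write $\mu\cdot x:=\sum_{h\in G}\mu(h)\,h\cdot x$. Then $\co(\orb(x,-x))$ is exactly the set of finitely supported $\mu\cdot x$ with $\mu\in\meas(G)$ of total variation $1$. Its closure also contains every $\mu\cdot x$ with $\mu\in\meas(G)$, since for instance $0=\frac{x+(-x)}{2}$ lies in it. Similarly, $\overline{\co(\orb(x))}$ contains every $\mu\cdot x$ with $\mu\in\prob(G)$.

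For $\varphi\in X^*$ put $f_\varphi(g):=\varphi(g\cdot x)$. A direct computation gives
\[f_\varphi\ast\mu^*(g)=\sum_h\mu(h)f_\varphi(gh)=\varphi\bigl(g\cdot(\mu\cdot x)\bigr).\]
Conversely, given $f\in\ell_\infty(G)$, let $G$ act on $\ell_\infty(G)$ by right translation, $(h\cdot f)(g):=f(gh)$. This is a left action by linear isometries, and it is continuous because $G$ is discrete. Let $X_f$ be the closed linear span of $\orb(f)$; it is separable since $G$ is countable. In this space $\mu\cdot f=f\ast\mu^*$. I will carry out the main case and then indicate the modifications.

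\emph{(2)$\Rightarrow$(1).} Suppose $x\neq0$ and $x=\frac{y+z}{2}$ with $y\neq z$ in $\overline{\co(\orb(x,-x))}$. Choose $\nu_n,\nu'_n\in\meas(G)$, finitely supported, with $\|\nu_n\cdot x-y\|,\|\nu'_n\cdot x-z\|<2^{-n}$, and set $\mu_n:=\frac{\nu_n+\nu'_n}{2}$. Then $\|\mu_n\cdot x-x\|\to0$. Take $\varphi\in B_{X^*}$ with $\varphi(y-z)=\|y-z\|$ and put $f:=f_\varphi$; we may assume $f\neq0$, since otherwise already $\varphi(y-z)$ would vanish.
\begin{itemize}
\item[\textbullet] The dictionary gives $\|f-f\ast\mu_n^*\|_\infty\le\|x-\mu_n\cdot x\|$, so condition (1) of Definition~\ref{def:stronglyCD} holds with $\ep_n:=\|x-\mu_n\cdot x\|/\|f\|_\infty\to0$.
\item[\textbullet] We also get $\|f\ast(\nu_n^*-(\nu'_n)^*)-f\ast(\nu_{n+1}^*-(\nu'_{n+1})^*)\|_\infty\le 4\cdot 2^{-n}$, which gives condition (2).
\item[\textbullet] Strong Choquet--Deny then yields a subsequence with $f\ast(\nu_{k_n}^*-(\nu'_{k_n})^*)\to0$ uniformly. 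Evaluating at $g=e$ gives $\varphi\bigl((\nu_{k_n}-\nu'_{k_n})\cdot x\bigr)\to0$. But this quantity tends to $\varphi(y-z)=\|y-z\|>0$, a contradiction.
\end{itemize}

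\emph{(1)$\Rightarrow$(2).} Given $f,\mu_n,\nu_n,\nu'_n$ as in Definition~\ref{def:stronglyCD}, work in $X_f$ and set $w_n:=(\nu_n-\nu'_n)\cdot f=f\ast(\nu_n^*-(\nu'_n)^*)$.
\begin{itemize}
\item[\textbullet] By condition (2) the sequence $(w_n)$ is Cauchy, so $w_n\to w$.
\item[\textbullet] By condition (1), $\mu_n\cdot f\to f$.
\item[\textbullet] Hence $\nu_n\cdot f=\mu_n\cdot f+\frac{w_n}{2}\to f+\frac w2$ and $\nu'_n\cdot f\to f-\frac w2$.
\item[\textbullet] Both limits lie in $\overline{\co(\orb(f,-f))}$, and their average is $f$.
\end{itemize}
Extremality of $f$ forces $w=0$, so in fact the whole sequence $w_n$ tends to $0$.

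\emph{The primed equivalence.} The proof of (1')$\Leftrightarrow$(2') is identical, with $\prob(G)$ in place of $\meas(G)$ and $\overline{\co(\orb(x))}$ in place of $\overline{\co(\orb(x,-x))}$.

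\emph{The double-primed equivalence.} If $G$ is Choquet--Deny and $x=\sum_n\alpha_n g_n\cdot x=\mu\cdot x$, then each $f_\varphi$ is $\mu$-harmonic. Hence $f_\varphi$ is constant on left cosets of $\langle\supp\mu\rangle$, so $\varphi(g_n\cdot x)=f_\varphi(g_n)=f_\varphi(e)=\varphi(x)$ for all $\varphi\in X^*$. Therefore $g_n\cdot x=x$ for every $n$. Conversely, if $f$ is $\mu$-harmonic, then $\mu\cdot f=f$ in $X_f$. By (1'') we get $h\cdot f=f$ for each $h\in\supp\mu$, i.e. $f(gh)=f(g)$, so $f$ is constant on left cosets of $\langle\supp\mu\rangle$.

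I expect the main obstacle to be bookkeeping rather than any deep step. One must get the left/right conventions consistent, so that right translation really is a left isometric action and $\mu\cdot f$ coincides with $f\ast\mu^*$. One must also check that all limits in the (1)$\Rightarrow$(2) direction stay inside the closed convex hull: this is where sub-probability signed measures are allowed in $\meas(G)$, and why $0$ belongs to $\overline{\co(\orb(f,-f))}$.
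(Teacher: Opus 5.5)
Your proof is correct. For (2)$\Rightarrow$(1) it is essentially the paper's Claims 7--9; the only differences are that you pick approximants at rate $2^{-n}$ rather than passing to a subsequence as in Claim 8, and you use a functional norming $y-z$.

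For (1)$\Rightarrow$(2) you take a genuinely different and much shorter route. The paper argues by contraposition through an ad hoc construction:
\begin{enumerate}
\item it puts the weighted norm $\vertiii{\phi}^*=\max\{\|\phi\|_\infty,\sup_n M_n(\phi)/\ep_n,\sup_n N_n(\phi)/\delta_n\}$ on a subspace of $\ell_\infty(G)$;
\item it takes the dual seminorm on $\ell_1(G)$, then quotients and completes;
\item it shows that $\delta_{1_G}$ is the midpoint of the distinct limits of $(\nu_n)$ and $(\nu'_n)$, using a separate weak-convergence argument (Claim 6) to get $\lim(\nu_n-\nu'_n)\neq0$.
\end{enumerate}
You instead place $f$ itself inside the right-regular representation on $\ell_\infty(G)$, where $\mu\cdot f=f\ast\mu^*$. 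There, conditions (1) and (2) of Definition~\ref{def:stronglyCD} become literally the norm statements $\mu_n\cdot f\to f$ and ``$(w_n)$ is Cauchy'' in $X_f$. No renorming, quotient or completion is needed, and the vector whose extremality is tested is $f$ rather than $\delta_{1_G}$.

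Your route buys brevity and transparency. It shows that the subsequence clause in the definition is redundant, since condition (2) already forces norm convergence of $(w_n)$. It also gives an intrinsic reformulation: $G$ is strongly Choquet--Deny if and only if every $f\in\ell_\infty(G)$ is an extreme point of the closed absolutely convex hull of its right translates. The paper's construction stays on the $\ell_1(G)$ side, in the spirit of Jaworski--Neufang, and yields a witness space completing $\ell_1(G)$ with canonical vector $\delta_{1_G}$. For this theorem, however, it gives nothing your argument lacks.

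Your direct treatment of the double-primed case, via exact $\mu$-harmonicity of $f_\varphi$, is also cleaner than the paper's ``analogously''.

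One harmless imprecision: $\co(\orb(x,-x))$ is the set of $\mu\cdot x$ with $\mu$ finitely supported of total variation \emph{at most} $1$, not exactly $1$, because cancellation can occur (e.g.\ $0=\frac{x+(-x)}{2}$). You only use the two inclusions you actually verify, so nothing breaks.
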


\begin{proof} We shall show the equivalence between (1) and (2). The equivalences between (1') and (2'), resp. between (1'') and (2'') can be proved analogously.

Suppose that $G$ is not strongly Choquet-Deny. Then, there exist $f\in\ell_\infty(G)$, a sequence $(\ep_n)_{n\in \N}$ of positive reals tending to $0$ and a sequence $(\mu_n)_{n\in\N}\subseteq \meas(G)$  along with decompositions $\mu_n=\frac{\nu_n+\nu'_n}{2}$, where $\nu_n,\nu'_n\in \meas(G)$ for each $n$ satisfying the following conditions. First, $f$ is $\ep_n$-approximately $\mu_n$-harmonic, for each $n$, and $\Big(\|f\ast (\nu^*_n-(\nu'_n)^*)- f\ast (\nu^*_{n+1}-(\nu'_{n+1})^*)\|_\infty\Big)_{n\in\N}$ is summable. Second, for every subsequence $(\mu_{k_n})_n$, $f\ast (\nu^*_{k_n}-(\nu'_{k_n})^*)$ does not converge to $0$.
Finally, without loss of generality, we assume that $\|f\|_\infty=1$.

For each $n\in\N$ we define $M_n:\ell_\infty(G)\to\R$ as follows. \[M_n(\phi):=\sup_{g\in G} \left|\phi(g)-\sum_{h\in G} \mu_n(h)\phi(gh)\right|,\quad \phi\in\ell_\infty(G).\] Notice that for each $\phi\in\ell_\infty(G)$, $\frac{M_n(\phi)}{\|\phi\|_\infty}$ is the least constant $K$ such that $\phi$ is $K$-approximately $\mu_n$-harmonic.

Similarly, for each $n\in\N$ we define $N_n:\ell_\infty(G)\to\R$ by the following formula. \[N_n(\phi):=\|\phi\ast (\nu^*_n-(\nu'_n)^*)- \phi\ast (\nu^*_{n+1}-(\nu'_{n+1})^*)\|_\infty,\quad \phi\in\ell_\infty(G),\] and for each $n\in\N$, set $\delta_n:=\max\{N_n(f),2^{-n}\}$.

Let us define the following function $\vertiii{\cdot}^*:\ell_\infty(G)\to\left[0,\infty\right]$. We set \[\vertiii{\phi}^*:=\max\left\{\|\phi\|_\infty,\sup_{n\in\N} \frac{M_n(\phi)}{\ep_n}, \sup_{n\in\N} \frac{N_n(\phi)}{\delta_n}\right\},\quad \phi\in\ell_\infty(G).\] Notice that it vanishes only at the zero function and it is homogeneous and subadditive (under the agreement that $k\cdot\infty=\infty$, for $k\neq 0$, $0\cdot\infty=0$, and $k+\infty\leq \infty$, for all $k\in\R\cup\{\infty\}$). Moreover, since by the assumption $\|f\|_\infty=1$, we get by definition that $\vertiii{f}^*=1$.

Now set 
\[X^*:=\big\{\phi\in\ell_\infty(G) \colon \vertiii{\phi}^*<\infty\big\}\]
and equip it with the norm $\vertiii{\cdot}^*$.\medskip

\noindent\textbf{Claim 1.} $X^*$ and $\vertiii{\cdot}^*$ are invariant under the action of $G$.

The claim immediately follows from the definition of $\vertiii{\cdot}^*$ and the fact that the original norm $\|\cdot\|_\infty$ as well as the maps $M_n$ and $N_n$, for all $n$, are all $G$-invariant. Indeed, for the norm we have already noticed it and for $M_n$ it is obvious from the definition. For $N_n$, it follows from its definition and again the fact that the norm $\|\cdot\|_\infty$ is $G$-invariant.\medskip

Finally, let us define the following seminorm $\vertiii{\cdot}$ on $\ell_1(G)$. We set \[\vertiii{x}:=\sup\left\{\frac{|\phi(x)|}{\vertiii{\phi}^*}\colon \phi\in X^*\setminus\{0\}\right\},\quad x\in\ell_1(G).\]\medskip

\noindent\textbf{Claim 2.} For every $x\in\ell_1(G)$, $\vertiii{x}\leq \|x\|_1$.

This follows from the definition of $\vertiii{\cdot}$ and the fact that for $\phi\in X^*$, we have $\vertiii{\phi}^*\geq \|\phi\|_\infty$.\medskip

We denote by $X$ the Banach space obtained from $(\ell_1(G),\vertiii{\cdot})$ after taking the quotient by the subspace $\{x\in\ell_1(G)\colon \vertiii{x}=0\}$ and taking the completion.\medskip

\noindent\textbf{Claim 3.} $G$ acts on $X$ by linear isometries.

It suffices to realize that the seminorm $\vertiii{\cdot}$ is $G$-invariant with respect to the standard action of $G$ on $\ell_1(G)$. Then it means that $G$ acts by linear isometries on the seminormed space $(\ell_1(G),\vertiii{\cdot})$ and so it also acts by linear isometries on the corresponding quotient and completion. However, $\vertiii{\cdot}$ is $G$-invariant by definition since $\vertiii{\cdot}^*$ is $G$-invariant.\medskip

For each $g\in G$, denote by $\delta_g\in\ell_1(G)$ the indicator function of the singleton $\{g\}$.\medskip

\noindent\textbf{Claim 4.} For each $g\in G$, $\vertiii{\delta_g}=1$.

Since by Claim 2, $\vertiii{\delta_g}\leq \|\delta_g\|_1=1$, it suffices to check that $\vertiii{\delta_g}\geq 1$. Since $\|f\|_\infty=1$, for every $n\in\N$ there is $g_n\in G$ such that $|f(g_n)|>1-1/n$, thus $|gg_n^{-1}\cdot f(g)|>1-1/n.$ Moreover, since $\vertiii{f}^*=1$ we get 
\[\vertiii{\delta_g}\geq \sup_n \frac{|(gg_n^{-1}\cdot f)(g)|}{\vertiii{gg_n^{-1}\cdot f}^*}=\sup_n |(gg_n^{-1}\cdot f)(g)|=1.\]\medskip

\noindent\textbf{Claim 5.} For every $g\in G$, $\lim_{n\to\infty} g\cdot \mu_n=\delta_g$, in the norm topology of $X$.

It suffices to check that $(\mu_n)_n$ seen as a sequence in $X$ converges to $\delta_{1_G}$ (here we identify each $\mu_n$, which is an element of $\ell_1(G)$, with its equivalence class inside $X$). In fact, we claim that for each $n\in\N$, we have $\vertiii{\mu_n-\delta_{1_G}}\leq \ep_n$. Verifying this will prove Claim 5 since $\ep_n\to 0$. Let $\phi\in X^*$ be such that $\vertiii{\phi}^*=1$. Then \[\big|\langle \mu_n-\delta_{1_G}, \phi\rangle\big|= \left|\left(\sum_{h\in G} \mu_n(h)\phi(h)\right)-\phi(1_G)\right|\leq M_n(\phi)\leq \ep_n\cdot\vertiii{\phi}^*=\ep_n.\] Since $\phi\in X^*$ with $\vertiii{\phi}^*=1$ was arbitrary, we get the desired $\vertiii{\mu_n-\delta_{1_G}}\leq \ep_n$.\medskip

\noindent\textbf{Claim 6.} $(\nu_n-\nu'_n)_{n\in\N}$ converges to a non-zero element of $X$.

First we observe that $(\nu_n-\nu'_n)_{n\in\N}$ converges. Indeed, analogously as in the proof of Claim 5 we pick an arbitrary $\phi\in X^*$ with $\vertiii{\phi}^*=1$ and any $n\in\N$ to check that \[\begin{split}\big|\langle (\nu_n-\nu'_n)-(\nu_{n+1}-\nu'_{n+1}), \phi\rangle\big|&=\big|\phi\ast (\nu^*_n-(\nu'_n)^*)-\phi\ast (\nu^*_{n+1}-(\nu'_{n+1})^*)\big(1_G\big)\big|\\ &\leq N_n(\phi)\leq \delta_n\cdot \vertiii{\phi}^*=\delta_n,\end{split}\] verifying that $\vertiii{(\nu_n-\nu'_n)-(\nu_{n+1}-\nu'_{n+1})}\leq \delta_n$ as $\phi\in X^*$, satisfying $\vertiii{\phi}^* = 1$, was arbitrary. Since $\sum_{n=1}^\infty \delta_n<\infty$, we get that $(\nu_n-\nu'_n)_{n\in\N}$ is a Cauchy sequence, thus it converges. For the sake of contradiction, suppose that it converges to $0$. Thus it also weakly converges to $0$, so we must have \[\lim_{n\to\infty} \langle \nu_n-\nu'_n, g^{-1}\cdot f\rangle=\lim_{n\to\infty} f\ast (\nu^*_n-(\nu'_n)^*)\big(g\big)=0,\] for all $g\in G$, which contradicts the assumption that $f\ast (\nu^*_{k_n}-(\nu'_{k_n})^*)$ does not converge to zero, i.e. for some $g\in G$, $\langle \nu_{k_n}-\nu'_{k_n}, g^{-1}\cdot f\rangle$ does not converge to zero.\medskip

Finally, since $\nu_n=\mu_n+\frac{\nu_n-\nu'_n}{2}$ and $\nu'_n=\mu_n-\frac{\nu_n-\nu'_n}{2}$, and both $(\mu_n)_n$ and $(\nu_n-\nu'_n)_n$ converge, we get that $(\nu_n)_n$ and $(\nu'_n)_n$ converge as well to some $\nu\in X$, resp. $\nu'\in X$. We have \[\delta_{1_G}=\lim_{n\to\infty} \mu_n=\lim_{n\to\infty} \frac{\nu_n+\nu'_n}{2}=\lim_{n\to\infty} \frac{\nu_n}{2}+\lim_{n\to\infty} \frac{\nu'_n}{2}=\frac{\nu+\nu'}{2}.\] Moreover, \[0\neq \lim_{n\to\infty} (\nu_n-\nu'_n)=\lim_{n\to\infty} \nu_n-\lim_{n\to\infty} \nu'_n=\nu-\nu',\] so $\nu\neq \nu'$. Since for each $n\in\N$ we have $\nu_n,\nu'_n\in \overline{\co(\orb(\delta_{1_G},-\delta_{1_G}))}$ we get that $\nu,\nu'\in\overline{\co(\orb(\delta_{1_G},-\delta_{1_G}))}$ and so $\delta_{1_G}$ is not an extreme point of $\overline{\co(\orb(\delta_{1_G},-\delta_{1_G}))}$.\bigskip

Suppose that there exist a separable $G$-Banach space $X$ and an element $x\in X$ which is not an extreme point of $\overline{\co(\orb(x,-x))}$. Thus there are $u\neq v\in \overline{\co(\orb(x,-x))}$ such that $\frac{u+v}{2}=x$. Without loss of generality, we may and will assume that $\|x\|=1$. Let $(u_n)_{n\in\N}\subseteq \co(\orb(x,-x))$, resp. $(v_n)_{n\in\N}\subseteq \co(\orb(x,-x))$ be sequences of elements from the convex hull of the orbit of $\{x,-x\}$ that converge to $u$ and $v$ respectively. For each $n\in\N$, let $\nu_n$, resp. $\nu'_n$ be elements from $\meas(G)$ such that \[u_n=\sum_{g\in G} \nu_n(g)g\cdot x,\quad v_n=\sum_{g\in G} \nu'_n(g) g\cdot x.\] We set $\mu_n:=\frac{\nu_n+\nu'_n}{2}$, for each $n$.

Now pick any $\phi\in X^*$ and let us define $f_\phi\in\ell_\infty(G)$ by the formula \[f_\phi(g):=\phi(g\cdot x), \quad g\in G.\] Clearly, $\|f_\phi\|_\infty\leq \|\phi\|_{X^*}$. In fact, by a convexity argument we have 
\begin{equation}\label{eq:sumof2fromorbit}
\|f_\phi\|_\infty=\sup_{g\in G} |\phi(g\cdot x)|=\sup_{y\in \overline{\co(\orb(x))}} |\phi(y)|.
\end{equation}
\medskip

\noindent\textbf{Claim 7.} Each such $f_\phi$ is $\ep_n$-approximately $\mu_n$-harmonic, for every $n$, where $(\ep_n)_{n\in\N}$ is a sequence of non-negative reals tending to $0$.

Fix $n \in \N$ and $g \in G$. We have that 
\begin{equation*}
    \|f_{\phi} - f_{\phi} \ast \mu_n^*\|_{\infty} = \sup_{g \in G} \left| \phi \left(g \cdot \left( x - \frac{u_n + v_n}{2} \right) \right) \right| \leq \|\phi\| \left\| x - \frac{u_n + v_n}{2} \right\|.
\end{equation*}
For $f_{\phi} \not=0$, we may take 
\begin{equation*}
    \eps_n := \frac{\|\phi\|}{\|f_{\phi}\|_{\infty}} \left\| x - \frac{u_n + v_n}{2} \right\|
\end{equation*}
which is non-negative and tends to zero (if $f_{\phi} = 0$, then any positive sequence tending to zero will do the job although, in Claim 9 below, we will have necessarily $f_{\phi}\not= 0$). Since $\frac{u_n + v_n}{2} \rightarrow \frac{u + v}{2} = x$, we get that $f_{\phi}$ is an $\eps_n$-approximately $\mu_n$-harmonic for every $n$.
\medskip

\noindent\textbf{Claim 8.} For each such $f_\phi$, there is a subsequence $(k_n)_{n\in\N}$ such that \[\Big(\|f_\phi\ast (\nu^*_{k_n}-(\nu'_{k_n})^*)-f_\phi\ast (\nu^*_{k_{n+1}}-(\nu'_{k_{n+1}})^*)\|_\infty\Big)_{n\in\N}\] is summable.

For each $n$, we have \[\begin{split}&\|f_\phi\ast (\nu^*_n-(\nu'_n)^*)-f_\phi\ast (\nu^*_{n+1}-(\nu'_{n+1})^*)\|_\infty\\ &=\sup_{g\in G} \big|f_\phi\ast (\nu^*_n-(\nu'_n)^*)-f_\phi\ast (\nu^*_{n+1}-(\nu'_{n+1})^*)\big(g\big)\big|\\ &=\sup_{g\in G} \big|g^{-1}\cdot \phi\big((u_n-v_n)-(u_{n+1}-v_{n+1})\big)\big|\\ &\leq \|\phi\|_{X^*}\cdot \|(u_n-v_n)-(u_{n+1}-v_{n+1})\|.\end{split}\] Since $u_n-v_n\to u-v$, thus $(u_n-v_n)_{n\in\N}$ is a Cauchy sequence in $X$, there exists a subsequence $(k_n)_{n\in\N}$ such that the sequence $(\|(u_{k_n}-v_{k_n})-(u_{k_{n+1}}-v_{k_{n+1}})\|)_{n\in\N}$ is summable. By the inequality above, this proves the claim.\medskip

\noindent\textbf{Claim 9.} $G$ is not strongly Choquet-Deny.

Let $\phi\in X^*$ be such that $\phi(u-v)\neq 0$. By Claim 7, $f_\phi$ is $\ep_n$-approximately $\mu_n$-harmonic for some sequence $(\ep_n)_{n\in\N}$ tending to $0$ and by Claim 8, without loss of generality, by passing to a subsequence if necessary, we assume that also $\Big(\|f_\phi\ast (\nu^*_n-(\nu'_n)^*)-f_\phi\ast (\nu^*_{n+1}-(\nu'_{n+1})^*)\|_\infty\Big)_{n\in\N}$ is summable, so (1) and (2) of Definition~\ref{def:stronglyCD} are satisfied for $f_\phi$ and $(\mu_n=\frac{\nu_n+\nu'_n}{2})_n$. However, since \[\lim_{n\to\infty} f_\phi\ast (\nu^*_n-(\nu'_n)^*)(1_G)=\lim_{n\to\infty} \phi\big(u_n-v_n\big)=\phi(u-v)\neq 0\] we get that for no subsequence $(k_n)_{n\in\N}$ we have $f_{\phi} \ast (\nu^*_{k_n}-(\nu'_{k_n})^*)\to 0$ showing that $f_\phi$ along with $(\mu_n= \frac{\nu_n+\nu'_n}{2})_n$ contradicts that $G$ is strongly Choquet-Deny.
\end{proof}

The following is therefore an immediate corollary of Theorems~\ref{thm:generalequiv} and~\ref{thm:CD-equiv}.

\begin{theorem}
Let $G$ be a countable group. Then the following are equivalent.

\begin{enumerate}
    \item $G$ is strongly Choquet-Deny.
    \item Every separable $G$-Banach space admits an equivalent strictly convex $G$-invariant norm.
\end{enumerate}
\end{theorem}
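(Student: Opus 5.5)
The plan is to chain the two equivalences already established, Theorem~\ref{thm:CD-equiv} and Theorem~\ref{thm:generalequiv}. A countable group carries the discrete topology, so every isometric action on a Banach space is automatically continuous. In particular, the separable $G$-Banach spaces of Theorem~\ref{thm:CD-equiv} are exactly those to which Theorem~\ref{thm:generalequiv} applies.

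For (1)$\Rightarrow$(2), assume $G$ is strongly Choquet-Deny and let $X$ be any separable $G$-Banach space. The implication (2)$\Rightarrow$(1) of Theorem~\ref{thm:CD-equiv} gives that every $x\in X$ is an extreme point of $\overline{\co(\orb(x,-x))}$. This is condition (2) of Theorem~\ref{thm:generalequiv}. That theorem then yields condition (1), namely a $G$-invariant strictly convex renorming of $X$. We should check that this renorming is \emph{equivalent} to the original norm. In the proof of (3)$\Rightarrow$(1) of Theorem~\ref{thm:generalequiv}, the new norm is $\|x\|+\|T(x)\|_Z$ with $T$ bounded, and this is equivalent to $\|\cdot\|$.

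For (2)$\Rightarrow$(1), assume every separable $G$-Banach space admits an equivalent strictly convex $G$-invariant norm. Fix a separable $G$-Banach space $X$. By the implication (1)$\Rightarrow$(2) of Theorem~\ref{thm:generalequiv}, every $x\in X$ is an extreme point of $\overline{\co(\orb(x,-x))}$. One point needs care here. That implication was proved with respect to the renormed norm, whereas statement (1) of Theorem~\ref{thm:CD-equiv} refers to the original one. This causes no problem, because the closed convex hull of an orbit depends only on the topology, and equivalent norms give the same topology. Since $X$ was arbitrary, statement (1) of Theorem~\ref{thm:CD-equiv} holds, and hence $G$ is strongly Choquet-Deny.

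I do not expect any genuine obstacle, since the result is a formal concatenation of the two theorems. The only points to verify are the two bookkeeping remarks above: that the renorming produced is equivalent to the original norm, and that extremality in a closed convex hull is unaffected by passing to an equivalent norm. Finally, by the renorming remark in Section~\ref{sec:prelim}, uniformly bounded representations reduce to isometric ones. This recovers Theorem~\ref{thm:A} in the form stated in the introduction.
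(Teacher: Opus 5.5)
Your proposal is correct and is the paper's argument: the paper obtains this theorem as an immediate corollary of Theorems~\ref{thm:generalequiv} and~\ref{thm:CD-equiv}, chaining them exactly as you do. Your two bookkeeping checks are the details left implicit in that one-line deduction: the equivalence of the norm $\|x\|+\|T(x)\|_Z$, and the fact that $\overline{\co(\orb(x,-x))}$ depends only on the topology, so it is unchanged under an equivalent norm.
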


We conclude with a remark explaining the case of complex scalars.

\begin{remark}\label{rem:complex}
All the results in this and the preceding sections are valid also in the case of complex scalars. This is based on the following two observations. We fix a topological group $G$.
\begin{enumerate}
    \item Suppose that a complex $G$-Banach space $X$ admits an equivalent $G$-invariant strictly convex real norm. Then it admits such a complex norm as well.
    \item Suppose that a real $G$-Banach space $X$ does not admit an equivalent $G$-invariant strictly convex norm. Then neither does its complexification, which is a complex $G$-Banach space.
\end{enumerate}

For (1), if $\|\cdot\|$ is an equivalent $G$-invariant strictly convex real norm on $X$, then \[\vertiii{x}:=\max_{z\in\T} \|z\cdot x\|,\quad x\in X,\] where $\T:=\{z\in\C\colon |z|=1\}$, is an equivalent $G$-invariant strictly convex complex norm on $X$. We leave the details to the reader.

For (2), first notice that the complexification is naturally a $G$-Banach space as well by considering the sum of the original action on $X\oplus X$. Second, observe that the embedding of $X$ into its complexification is a $G$-equivariant linear isometry, when the range is seen as a real space. Thus, if the complexification admits an equivalent $G$-invariant strictly convex norm, then so does its subspace $X$.
\end{remark}

\section{Strongly Choquet-Deny groups and their classification}\label{sec:examples}

In this section, we derive several closure properties of the class of strongly Choquet-Deny groups. We prove that abelian groups are strongly Choquet-Deny and that the Heisenberg group is not. This will culminate with the classification of finitely generated strongly Choquet-Deny groups and hereby with the proof of Theorem~\ref{thm:B}.

\begin{proposition}\label{prop:closureofCD}
The class of strongly Choquet-Deny groups is closed under
\begin{enumerate}
    \item taking subgroups;
    \item taking finite-index overgroups.
\end{enumerate}
\end{proposition}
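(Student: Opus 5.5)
Both parts go through the renorming side of the equivalences, Theorem~\ref{thm:CD-equiv} together with Theorem~\ref{thm:generalequiv}, rather than through Definition~\ref{def:stronglyCD} directly. So $G$ is strongly Choquet-Deny iff every separable $G$-Banach space admits an equivalent $G$-invariant strictly convex norm.

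For (2) the tool is Lemma~\ref{lem:cocompactsubgroup}. Let $H\leq G$ have finite index with $H$ strongly Choquet-Deny. Since $H$ has finite index, $G$ is countable. Take any separable $G$-Banach space $X$. Restricting the action to $H$ makes $X$ a separable $H$-Banach space, so it admits an equivalent $H$-invariant strictly convex norm. Giving $G$ the discrete topology, $H$ is closed and $G/H$ is finite, hence compact. Lemma~\ref{lem:cocompactsubgroup} therefore yields an equivalent $G$-invariant strictly convex norm on $X$, and by Theorem~\ref{thm:A} the group $G$ is strongly Choquet-Deny. This part is routine.

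For (1) I will induce representations. Let $H\leq G$ with $G$ strongly Choquet-Deny, and let $Y$ be a separable $H$-Banach space. I aim to verify condition (2) of Theorem~\ref{thm:generalequiv} for $Y$, namely that every $y\in Y$ is an extreme point of $\overline{\co(\orb_H(y,-y))}$.
\begin{enumerate}
\item Fix a set $T$ of representatives of the left cosets $G/H$, and let $X=\ell_2(G/H;Y)$, viewed as functions $F\colon G\to Y$ with $F(gh)=h^{-1}\cdot F(g)$ and norm $\|F\|^2=\sum_{t\in T}\|F(t)\|^2$. This norm does not depend on $T$ because $H$ acts by isometries.
\item Let $G$ act by $(g\cdot F)(k)=F(g^{-1}k)$. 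This is an isometric action of the countable discrete group $G$, and $X$ is separable because $G/H$ is countable and $Y$ is separable.
\item Let $\iota\colon Y\to X$ send $y$ to the function supported on the coset $H$ with $F(h)=h^{-1}\cdot y$. Then $\iota$ is an $H$-equivariant linear isometry.
\item The map $P\colon X\to Y$, $P(F)=F(1_G)$, is a norm-one linear map. It is $H$-equivariant, since $P(h\cdot F)=F(h^{-1})=h\cdot F(1_G)$, and $P\circ\iota=\id_Y$.
\end{enumerate}

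Now suppose $y=\frac{u+v}{2}$ with $u,v\in\overline{\co(\orb_H(y,-y))}$. Then $\iota u$ and $\iota v$ lie in $\overline{\co(\orb_H(\iota y,-\iota y))}\subseteq\overline{\co(\orb_G(\iota y,-\iota y))}$. By Theorem~\ref{thm:CD-equiv} applied to $G$, the point $\iota y$ is extreme in the latter set, so $\iota u=\iota v$. Since $\iota$ is injective, $u=v$. Theorem~\ref{thm:generalequiv} then gives an equivalent $H$-invariant strictly convex norm on $Y$, and Theorem~\ref{thm:A} shows $H$ is strongly Choquet-Deny.

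The only delicate point is that the induced space is a genuine separable $G$-Banach space containing $Y$ equivariantly and isometrically. This holds because $G/H$ is countable, and countability of $G$ is the standing assumption throughout the paper. A direct argument via the definition (pushing measures on $H$ forward to $G$ and extending $f$ coset-wise) would also be possible, but the induced-representation route avoids any bookkeeping with convolutions.
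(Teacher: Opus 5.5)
Your proposal is correct and is essentially the paper's proof. Part (2) is the paper's argument via Lemma~\ref{lem:cocompactsubgroup} applied to the finite, hence compact, quotient $G/H$. Part (1) uses the same $\ell_2$-induced representation, which the paper writes in the cocycle model $\bigl(\bigoplus_n X_n\bigr)_{\ell_2}$ rather than as functions $F\colon G\to Y$. The only other difference is cosmetic: the paper restricts a $G$-invariant strictly convex norm on the induced space to the $H$-invariant copy of $Y$, whereas you transfer the extreme-point condition of Theorem~\ref{thm:CD-equiv} through the isometric embedding $\iota$; your map $P$ is never actually used.
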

\begin{proof}
We show (1). Let $G$ be a strongly Choquet-Deny countable group and $H\leq G$ be a subgroup. Let $(g_n)_{n\in\N}\subseteq G$ be an enumeration of left-coset representatives, where $g_1=1_G$. We may assume that it is infinite, the finite case is easier. Let $X$ be a separable $H$-Banach space. We set \[Y:=\left(\bigoplus_{n\in \N} X_n\right)_{\ell_2}\] to be the $\ell_2$-sum of copies of $(X_n)_{n\in\N}$, where each $X_n$ is a copy of $X$. In particular, each $X_n$ carries a linear isometric action of $H$. We define the induced action of $G$ on $Y$. First, notice that we have an action of $G$ on $\N$ defined by $g\cdot n=m$ if $gg_n\in g_mH$. Second, we define an $H$-valued \emph{cocycle} $\sigma: G\times\N\to H$ by setting $\sigma(g,n)$ to be the unique $h\in H$ such that $gg_n=g_{g\cdot n}h$. We are ready to define $G\curvearrowright Y$ as follows. We set \[g\cdot (x_n)_n:=(\sigma(g,g^{-1}\cdot n)x_{g^{-1}\cdot n})_{n\in\N},\quad g\in G,\;(x_n)_{n\in\N}\in Y.\] This is a linear action by isometries. Since for every $h\in H$ we have $hg_1=h=g_1h$ we have that the action of $H$ on $X_1$ coincides with the action of $H$ on $X$, i.e. there is an $H$-equivariant linear isometry $T:X\to X_1\subseteq Y$. Since by the assumption $G$ is strongly Choquet-Deny, there is an equivalent $G$-invariant strictly convex norm on $Y$, and the $H$-equivariant linear map $T$ pushes the strictly convex norm from the subspace $X_1$ to $X$. Therefore, $X$ admits an equivalent $H$-invariant strictly convex norm. Since $X$ was arbitrary, it follows, by \Cref{thm:CD-equiv}, that $H$ is strongly Choquet-Deny.\medskip

To prove (2), let $G$ be a countable group and $H$ be a finite-index subgroup that is strongly Choquet-Deny. Let $X$ be a separable $G$-Banach space. By \Cref{thm:CD-equiv}, there is an equivalent $H$-invariant strictly convex norm on $X$ and since $G/H$ is finite, i.e. compact, by Lemma~\ref{lem:cocompactsubgroup}, there is also an equivalent $G$-invariant strictly convex norm on $X$. Again, since $X$ was arbitrary, it follows, by \Cref{thm:CD-equiv}, that $G$ is strongly Choquet-Deny.
\end{proof}

We are now almost ready to prove Proposition~\ref{prop:SCDandPSCD} connecting strongly Choquet-Deny and positively strongly Choquet-Deny groups. We just need a simple lemma.

\begin{lemma} \label{extend-to-G-Z2}
Every action of any topological group $G$ on any Banach space $X$ by linear isometries extends to an action of $G\times \Z_2$ on $X$, where the non-trivial element of $\Z_2$ acts as $-\id$.
\end{lemma}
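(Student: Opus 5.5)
The plan is to write down the extended action explicitly and then check that it has the required properties. Let $\alpha\colon G\times X\to X$ be the given continuous action by linear isometries, and write $\Z_2=\{0,1\}$ additively. We define
\[
\beta\bigl((g,\epsilon),x\bigr):=(-1)^{\epsilon}\,\alpha(g,x),\qquad (g,\epsilon)\in G\times\Z_2,\; x\in X.
\]
Restricted to $G\times\{0\}$, this is $\alpha$. The element $(1_G,1)$ acts as $-\id$.

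We then verify the following, in order.

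\begin{enumerate}
    \item Each map $\beta((g,\epsilon),\cdot)$ is a linear isometry. It is $\pm$ a linear isometry, and $-\id$ is itself a linear isometry.
    \item $\beta$ is an action. The key point is that $-\id$ commutes with every linear operator. Hence
    \[
    \beta\bigl((g,\epsilon),\beta((h,\eta),x)\bigr)=(-1)^{\epsilon+\eta}\,\alpha(g,\alpha(h,x))=\beta\bigl((gh,\epsilon+\eta),x\bigr),
    \]
    using that $(-1)^{\epsilon+\eta}$ depends only on $\epsilon+\eta \bmod 2$. The identity $(1_G,0)$ clearly acts trivially.
    \item $\beta$ is continuous. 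Here $G\times\Z_2$ carries the product topology, with $\Z_2$ discrete. The map $\beta$ is the composition of the continuous map $((g,\epsilon),x)\mapsto(\epsilon,\alpha(g,x))$ with scalar multiplication by $(-1)^{\epsilon}$. The latter is continuous because it is locally constant in $\epsilon$.
\end{enumerate}

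There is no real obstacle here. The only point needing care is the commutation of $-\id$ with the $G$-action. It is what makes the action of the direct product $G\times\Z_2$ well defined, rather than that of a semidirect product, and it holds simply because $-\id$ is a scalar operator.
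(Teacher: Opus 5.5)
Your proposal is correct and takes essentially the same approach as the paper. The paper's one-line proof just observes that $-\id$ is central in $\Iso(X)$. You make this explicit with the formula $(-1)^{\epsilon}\alpha(g,x)$ and verify the action axioms and continuity, which the paper leaves implicit.
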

\begin{proof}
This is immediate since $-\id$ is a central element of $\Iso(X)$, i.e. it commutes with all linear isometries on $X$.
\end{proof}

\begin{proof}[Proof of Proposition~\ref{prop:SCDandPSCD}]
Suppose first that $G$ is strongly Choquet-Deny. By Proposition~\ref{prop:closureofCD}~(2), $G\times\Z_2$ is strongly Choquet-Deny as well and thus, by Proposition~\ref{prop:CDversions}, $G\times\Z_2$ is positively strongly Choquet-Deny.

Conversely, suppose that $G\times \Z_2$ is positively strongly Choquet-Deny. Let $X$ be any separable $G$-Banach space. By Lemma \ref{extend-to-G-Z2}, we can extend the action of $G$ on $X$ to $G \times \Z_2$ so that the non-trivial element of $\Z_2$ acts as $-\id$. It follows that for every $x\in X$ we have \[
    \overline{\co(\orb_{G \times \Z_2} (x))} = \overline{\co(\orb_G(x, -x))}.\]
Thus, for any $x\in X$, since $G\times\Z_2$ is positively strongly Choquet-Deny, the equivalence between (1') and (2') in Theorem~\ref{thm:CD-equiv} gives that $x$ is an extreme point of shows that every $x \in X$ is an extreme point of $\overline{\co(\orb_{G \times \Z_2} (x))} = \overline{\co(\orb_G(x, -x))}$, which then by the equivalence between (1) and (2) in Theorem~\ref{thm:CD-equiv} gives that $G$ is strongly Choquet-Deny since $X$ was arbitrary.
\end{proof}

\begin{theorem}\label{thm:abelianCD}
Abelian groups are strongly Choquet-Deny.
\end{theorem}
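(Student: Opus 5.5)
By Theorem~\ref{thm:CD-equiv} (equivalence of (1) and (2)), it suffices to show the following: for every abelian countable $G$, every separable $G$-Banach space $X$ and every $x\in X$, the point $x$ is an extreme point of $\overline{\co(\orb(x,-x))}$. So fix $x$, and suppose $x=\frac{u+v}{2}$ with $u,v\in\overline{\co(\orb(x,-x))}$. The goal is to show $u=v=x$.

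The first step is to pass from vectors to operators, which is where commutativity is used. Let $Y:=\overline{\Span}\,\orb(x)$, which is a $G$-invariant subspace. Choose $u_n\to u$ and $v_n\to v$ in $\co(\orb(x,-x))$, and write $u_n=T_nx$ and $v_n=S_nx$, where $T_n,S_n\in\co\{\pm g\colon g\in G\}\subseteq B(X)$. Since $G$ is abelian, $T_n$ commutes with every $h\in G$, so $T_n(h\cdot x)=h\cdot u_n$ converges for every $h$. Because $\|T_n\|\leq 1$, the sequence $(T_n)$ converges strongly on $Y$ to a contraction $T$ with $Tx=u$. Likewise $S_n\to S$ strongly on $Y$, with $Sx=v$. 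The limits $T,S$ commute with each other and with $G$. Moreover $\frac{T+S}{2}$ fixes every $h\cdot x$, hence equals $\id_Y$. Writing $D:=T-\id_Y$, we get $T=\id+D$ and $S=\id-D$. Every product $T^aS^b$ is again a strong limit of elements of $\co\{\pm g\}$, so all of these products are contractions on $Y$.

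It then remains to prove $D=0$, i.e. that $\id$ is not a nontrivial midpoint of two commuting, mutually power-bounded operators of this form. Spectrally this is easy. Work in the commutative Banach algebra generated by $T$ and $S$: since $T$ and $S$ are power bounded, $\sigma(D)\subseteq\{\lambda\colon|1+\lambda|\leq1\}\cap\{\lambda\colon|1-\lambda|\leq1\}=\{0\}$ (the relevant characters take values in these discs), so $D$ is quasinilpotent. In particular $\sigma(T)=\sigma(S)=\{1\}$. The Katznelson--Tzafriri theorem then gives $\|T^nD\|\to0$ and $\|S^nD\|\to0$. I would combine these with the identities $T^n-\id=D\sum_{k<n}T^k$ and $S^n=(2\id-T)^n$, together with the uniform bound on all mixed products $T^aS^b$. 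The idea is to show that for a fixed $y\in Y$ the vectors $T^ny$ and $S^ny$ are forced to drift apart linearly in $n$ unless $Dy=0$, which would contradict boundedness; the finite-dimensional analogue (a nonzero nilpotent $D$ makes $(\id+D)^n$ grow polynomially) is the model.

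I expect this last step to be the main obstacle, because quasinilpotent perturbations of $\id$ can be power bounded in infinite dimensions. If the operator-theoretic route stalls, I would instead exploit amenability of the abelian group more directly, which is closer to the classical proof of Choquet and Deny. Take $\phi\in X^*$ with $\phi(u-v)\neq0$ and $f_\phi(g)=\phi(g\cdot x)$, as in the proof of Theorem~\ref{thm:CD-equiv}. Since everything commutes, $f_\phi\ast(\nu_n^*-(\nu_n')^*)$ is a limit of translates of $f_\phi$. One would then decompose $f_\phi$ along the weak$^*$-compact convex set of approximately harmonic functions and use that its extreme points are (limits of) multiplicative characters $\chi$ on $G$. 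For a character $\chi$ bounded by $1$, the relation $1=\frac{\hat\nu(\chi)+\hat\nu'(\chi)}{2}$ with $|\hat\nu(\chi)|,|\hat\nu'(\chi)|\leq1$ forces $\hat\nu(\chi)=\hat\nu'(\chi)$ by strict convexity of the scalar field. This would give $\phi(u-v)=0$, the desired contradiction.
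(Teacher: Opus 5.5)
Your reduction is correct and is essentially the paper's proof seen from the other side of Theorem~\ref{thm:CD-equiv}. The paper works in $\overline{\Span(\orb(f))}\subseteq\ell_\infty(G)$ with the convolution operators $x\mapsto x\ast\mu_n^*$ and $x\mapsto x\ast\frac{\nu_n^*-(\nu_n')^*}{2}$. You work in $Y=\overline{\Span}\,\orb(x)$ with $T_n,S_n\in\co\{\pm g\}$. In both cases commutativity is what turns the convex combinations into operators on the cyclic subspace that commute with the $G$-action, so that convergence at the generator propagates to the whole subspace. You thus reach $\id_Y=\frac{T+S}{2}$ with $\|T\|,\|S\|\leq 1$, which is exactly the paper's $\id_X=\frac{V+W}{2}$.

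At that point, however, your proposal stops. The step you call the main obstacle is never proved, and the ``linear drift'' idea is not carried out. The fallback is also not sound as stated. With signed $f$, signed $\nu_n,\nu_n'$, and only approximate harmonicity along a sequence, there is no cone of positive harmonic functions on which to run the classical Choquet--Deny extreme-point argument. Moreover, extreme points of a translation-invariant convex set of approximately harmonic functions need not be characters; compare the non-constant approximately harmonic functions on $\Z$ in Section~\ref{sec:prelim}.

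The missing idea is a standard fact that closes the argument in one line: for every Banach space $Y$, $\id_Y$ is an extreme point of the unit ball of $\mathcal{B}(Y)$. The paper cites \cite[Theorem 8.28]{OETbook} for this. Hence $T=S=\id_Y$, and so $u=Tx=x=v$.

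For complex scalars, the fact can be seen as follows. If $\|y\|=\|\psi\|=\psi(y)=1$, then $\psi(Ty)=1+\psi(Dy)$ and $\psi(Sy)=1-\psi(Dy)$ lie in the closed unit disc and average to $1$, so $\psi(Dy)=0$. Thus the numerical range of $D$ is $\{0\}$, and the Bohnenblust--Karlin inequality $\|D\|\leq e\,v(D)$ gives $D=0$. Real scalars reduce to this via the complexification normed by $\|y+iz\|=\sup_\theta\|y\cos\theta-z\sin\theta\|$, in which real operators keep their norms.

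Incidentally, your Katznelson--Tzafriri route can also be closed. From $\id_Y=\big(\frac{T+S}{2}\big)^n$ you get $D=2^{-n}\sum_{k}\binom{n}{k}T^kS^{n-k}D$ with $\|T^kS^{n-k}D\|\leq\|T^kD\|\to 0$. This makes $\|D\|$ arbitrarily small, since the terms with small $k$ carry a vanishing share of the binomial mass. But that is far heavier than the extremality of $\id_Y$.
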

\begin{proof}
 Fix a countable abelian group $G$ and a sequence of signed measures $(\mu_n)_{n\in\N}\subseteq \meas(G)$, decompositions $\mu_n=\frac{\nu_n+\nu'_n}{2}$, for every $n$, and a function $f\in\ell_\infty(G)$ satisfying (1) and (2) of Definition~\ref{def:stronglyCD}. Without loss of generality, assume that $\|f\|_\infty=1$. In order to reach a contradiction, suppose that for no subsequence $(k_n)_{n\in\N}$ we have $f\ast (\nu^*_{k_n}-(\nu'_{k_n})^*)\to 0$.

 Set $X:=\overline{\Span(\orb(f,-f))}=\overline{\Span(\orb(f))}\subseteq \ell_\infty(G)$. By (1), we have that $\|f-f\ast \mu^*_n\|_\infty\leq\ep_n$ and since being $\ep_n$-approximately $\mu_n$-harmonic is a $G$-invariant condition, we get that $\|g\cdot f-g\cdot f\ast \mu^*_n\|_\infty\leq \ep_n$, for all $g\in G$. It follows that for every $x\in\Span(\orb(f))$ we have $\|x-x\ast\mu^*_n\|_\infty\to 0$. 
 
 Since $G$ is abelian, for every $x\in X$ we have $x\ast\mu^*_n=\mu^*_n\ast x$ implying that $x\ast\mu^*_n\in X$, for all $x\in X$ and $n\in\N$. Therefore \[T_n(x):=x\ast\mu^*_n,\quad x\in X\] defines a bounded linear operator $T_n:X\to X$, for each $n\in\N$, with $\|T_n\|\leq 1$ since $\|\mu_n\|_1\leq 1$. We have
 \begin{equation}\label{eq:T_n-to-Id}
  \|T_n(x)-x\|_\infty\to 0
 \end{equation}
for all $x\in\Span(\orb(f))$. By continuity, \eqref{eq:T_n-to-Id} extends to all $x\in X$, so the sequence $(T_n)_{n\in\N}$ converges in the strong operator topology to the identity operator $\id_X$.
 
Similarly, for every $x\in \Span(\orb(f))$ we get 
 \begin{equation}\label{eq:summable}
 \Big(\|x\ast (\nu^*_n-(\nu'_n)^*)-x\ast (\nu^*_{n+1}-(\nu'_{n+1})^*)\|\Big)_{n\in\N}\;\text{ is summable,} 
 \end{equation}
since \eqref{eq:summable} holds by definition for $f$, by $G$-invariance of \eqref{eq:summable} for $g\cdot f$, for all $g\in G$, and \eqref{eq:summable} is clearly closed under linear combination.

Therefore, the formula \[U_n(x):=x\ast \Big(\frac{\nu_n^*-(\nu'_n)^*}{2}\Big),\quad x\in X\] again defines a linear operator $U_n:X\to X$ with $\|U_n\|\leq 1$, such that for every $x\in \Span(\orb(f))$ the sequence $(\|U_n(x)-U_{n+1}(x)\|_\infty)_{n\in\N}$ is summable. In particular, $(U_n(x))_{n\in\N}$ is Cauchy for every $x\in \Span(\orb(f))$.

Given $y \in X$ and $x\in\Span(\orb(f))$, since $\|U_n\| \leq 1$, we obtain 
\begin{equation*}
    \|U_n y - U_m y\|_{\infty} \leq 2 \|x - y\|_{\infty} + \|U_n x - U_m x\|_{\infty}.
\end{equation*}
This shows that $(U_n y)_{n \in \N}$ is Cauchy for every $y \in X$. It follows that $(U_n)_{n \in \N}$ converges in the strong operator topology to an operator $U: X \rightarrow X$ with $\|U\| \leq 1$.

We claim that $U\neq 0$. Indeed, otherwise \[0=U(f)=\lim_{n\to\infty} U_n(f)=\lim_{n\to\infty} f\ast \Big(\frac{\nu_n^*-(\nu'_n)^*}{2}\Big)= \frac{1}{2} \lim_{n\to\infty} f\ast \Big(\nu_n^*-(\nu'_n)^*\Big)\] which contradicts the assumption that for no subsequence $(k_n)_{n\in\N}$ we have $f\ast (\nu^*_{k_n}-(\nu'_{k_n})^*)\to 0$.

Finally, we consider the operators \[V:=\id_X+U:X\to X,\quad W:=\id_X-U:X\to X.\] Since $U\neq 0$, we have $V\neq W$. Notice that we have $V=\lim_{n\to \infty} T_n+U_n$, resp. $W=\lim_{n\to\infty} T_n-U_n$, in the strong operator topology. Since 
\[T_n+U_n(x)=x\ast \mu^*_n+x\ast \Big(\frac{\nu_n^*-(\nu'_n)^*}{2}\Big)=x\ast\nu_n^*,\quad x\in X, \]
\[T_n-U_n(x)=x\ast \mu^*_n-x\ast \Big(\frac{\nu_n^*-(\nu'_n)^*}{2}\Big)=x\ast (\nu'_n)^*,\quad x\in X\] and $\nu_n,\nu'_n\in \meas(G)$, we get that for all $n\in\N$, \[\|T_n+U_n\|\leq 1,\quad \|T_n-U_n\|\leq 1.\] This implies that $\|V\|\leq 1$ and $\|W\|\leq 1$. But then \[\id_X=\frac{\id_X+U+\id_X-U}{2}=\frac{V+W}{2}\] which contradicts that $\id_X$ is an extreme point of $B_{\mathcal{B}(X)}$, the unit ball of the space of bounded linear operators on $X$ (see e.g. \cite[Theorem 8.28]{OETbook}).
\end{proof}

As a consequence we get that for every countable abelian group $G$, every separable $G$-Banach space admits an equivalent strictly convex $G$-invariant norm. In fact, we prove something stronger.

\begin{corollary}\label{cor:cocompactabelian}
Let $G$ be a topological group that contains a closed abelian subgroup $H$ such that $G/H$ is compact. Then every separable $G$-Banach space admits an equivalent strictly convex $G$-invariant norm.
\end{corollary}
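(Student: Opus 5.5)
By Lemma~\ref{lem:cocompactsubgroup}, it suffices to prove the following: for a topological abelian group $H$ and a separable $H$-Banach space $X$, the action of $H$ admits an equivalent $H$-invariant strictly convex norm. Once this holds for the restriction to $H$ of the given $G$-action, the lemma applied to the cocompact closed subgroup $H\leq G$ upgrades it to a $G$-invariant strictly convex renorming.

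To treat a possibly uncountable or non-discrete abelian $H$, I will use Theorem~\ref{thm:generalequiv}, which holds for arbitrary topological groups. It reduces the claim to showing that every $x\in X$ is an extreme point of $\overline{\co(\orb_H(x,-x))}$. Suppose not. Then there are $x\in X$ and $u\neq v$ in $\overline{\co(\orb_H(x,-x))}$ with $x=\frac{u+v}{2}$. Choose sequences $u_n\to u$ and $v_n\to v$ with $u_n,v_n\in\co(\orb_H(x,-x))$. Each $u_n$ and $v_n$ is a finite signed combination $\sum \alpha_i h_i\cdot x$ with $\sum|\alpha_i|\le 1$. Only countably many elements $h_i\in H$ occur across all $n$, so let $H_0$ be the countable subgroup of $H$ they generate. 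Then $H_0$ is abelian, and $u,v\in\overline{\co(\orb_{H_0}(x,-x))}$. Regard $X$ as an $H_0$-Banach space with $H_0$ discrete; continuity is automatic. By Theorem~\ref{thm:abelianCD}, $H_0$ is strongly Choquet-Deny. By Theorem~\ref{thm:CD-equiv}, (2)$\Rightarrow$(1), every point of $X$ is an extreme point of its $H_0$-symmetric orbit hull, which contradicts $x=\frac{u+v}{2}$ with $u\neq v$. Hence condition (2) of Theorem~\ref{thm:generalequiv} holds for $H$, and $X$ has an $H$-invariant strictly convex renorming.

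The main point requiring care is this countable reduction. The earlier results are stated for countable (discrete) groups, and non-extremality is witnessed by only countably many group elements. The argument works because the closed convex hull for $H_0$ is contained in the one for $H$ and already contains $u$ and $v$. Beyond this, the only checks needed are that Lemma~\ref{lem:cocompactsubgroup} applies to an arbitrary topological $G$, and that the subaction of the closed subgroup $H$ is still a continuous isometric action. Both are immediate.
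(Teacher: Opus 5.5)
Your proposal is correct and follows essentially the same route as the paper. You reduce to $H$ via Lemma~\ref{lem:cocompactsubgroup}, use Theorem~\ref{thm:generalequiv} to get a non-extreme point $x$ of $\overline{\co(\orb_H(x,-x))}$, pass to the countable abelian subgroup generated by the group elements appearing in the approximating convex combinations, and reach a contradiction via Theorems~\ref{thm:abelianCD} and~\ref{thm:CD-equiv}. Your explicit remark that this countable subgroup can be taken discrete, so that $X$ is a separable $H_0$-Banach space as Theorem~\ref{thm:CD-equiv} requires, is a detail the paper leaves implicit.
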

\begin{proof}
Let $X$ be a separable $G$-Banach space. It suffices to show that there is an equivalent $H$-invariant norm since then we can apply Lemma~\ref{lem:cocompactsubgroup}. So suppose that there is no equivalent $H$-invariant strictly convex norm. Then by \Cref{thm:generalequiv}, there is $x\in X$ that is not an extreme point of $\overline{\co(\orb_H(x,-x))}$. Therefore, there are $y\neq z\in\overline{\co(\orb_H(x,-x))}$ such that $x=\frac{y+z}{2}$. Let $(y_n)_{n\in\N}$, resp. $(z_n)_{n\in\N}$ be sequences of elements from $\co(\orb_H(x,-x))$ that converge to $y$, resp. to $z$. Since for each $n$ there are only finitely many elements $h_1,\ldots,h_k\in H$ such that $y_n$ is an absolutely convex combination of elements $h_1\cdot x,\ldots,h_k\cdot x$, and analogously for $z_n$, it follows that there is a countable subgroup $H'\leq H$ such that $y,z\in\overline{\co(\orb_{H'}(x,-x))}$. By \Cref{thm:CD-equiv}, it follows that $H'$ is not strongly Choquet-Deny. However, $H'$ is abelian and so is strongly Choquet-Deny by \Cref{thm:abelianCD}, a contradiction.
\end{proof}

Among finitely generated groups, the simplest example that is neither abelian nor even virtually abelian is the discrete Heisenberg group $\Hei(\Z)$. Here by $\Hei(\Z)$ we mean the subgroup of $\mathrm{GL}_3(\Z)$ consisting of unit upper triangular matrices, i.e. matrices of the form \[\begin{pmatrix}
    1 & a & c \\
    0 & 1 & b \\
    0 & 0 & 1
\end{pmatrix},\] where $a,b,c\in\Z$. Alternatively, it is the group with the presentation 
\begin{equation*} 
\langle a,b,c; [a,c],[b,c],[a,b]c^{-1}\rangle. 
\end{equation*}
Both views allow us to see $\Hei(\Z)$ as $\Z^3$ with the group multiplication defined as \[(a,b,c)\cdot (a',b',c')=(a+a',b+b',c+c'-a'b).\] Given an element $x\in\Hei(\Z)$, we shall identify it with the corresponding element of $\Z^3$ and denote by $x(i)$, $i\in\{1,2,3\}$, its $i$-th coordinate.

\begin{example}\label{ex:Heisenberg}
$\Hei(\Z)$ is not strongly Choquet-Deny.
\end{example}
\begin{proof}
Let us denote by $\FF$ the set of all finite sequences of positive integers without repetitions. For each $v\in\FF$, we denote by $|v|$ its length and by $v(i)$, $i\leq |v|$, its $i$-th element, i.e. each $v\in \mathcal{F}$ is the sequence $(v(i))_{i=1}^{|v|}$, and $v(i)\neq v(j)$, for $i\neq j\leq |v|$.\medskip

\noindent\textbf{Claim 1.} There exists an infinite sequence $(s_n)_{n=1}^\infty\subseteq\Hei(\Z)$ satisfying the following conditions below, where for $v\in\FF$ we denote by $s_v$ the element $\prod_{i\leq |v|} s_{v(i)}$.
\begin{enumerate}
    \item The map $v\in\FF\to s_v\in\Hei(\Z)$ is injective.
    \item For every $v\in\FF$ and $i\leq |v|$, there is no $w\in\FF$ such that $s_v\cdot s_{v(i)}=s_w$.
    \item For every $g\in\Hei(\Z)\setminus \{s_v\colon v\in\FF\}$, there is at most one $n\in\N$ such that $g\cdot s_n\in \{s_v\colon v\in\FF\}$.
\end{enumerate}

For each $n\in\N$, we set \[s_n:=(16^{2^n},2^{2^n},0)\] and we show that this works. We prove (1). Pick $v\neq w\in\FF$ and let us show that $s_v\neq s_w$. First assume that the ranges of $v$ and $w$ are not equal, i.e. $v$ and $w$ are not enumerations of the same finite subset of $\N$. Then \[s_v(1)=\sum_{i\leq |v|} 16^{2^{v(i)}}\neq \sum_{j\leq |w|} 16^{2^{w(j)}}=s_w(1)\] where the inequality follows from the uniqueness of the base-$16$ expansion. Since $s_v(1)\neq s_w(1)$, then in particular $s_v\neq s_w$. So suppose now that $v$ and $w$ enumerate the same finite set and they only differ in the order of that set. Then $s_v(1)=s_w(1)$ and $s_v(2)=s_w(2)$, however, we show $s_v(3)\neq s_w(3)$. From the formula $(a,b,c)\cdot (a',b',c')=(a+a',b+b',c+c'-a'b)$ for the multiplication in $\Hei(\Z)$ we get \[s_v(3)=-\sum_{\{(i,j)\colon 2\leq i\leq |v|,j< i\}} 16^{2^{v(i)}}2^{2^{v(j)}}\] and analogously \[s_w(3)=-\sum_{\{(i,j)\colon 2\leq i\leq |w|,j< i\}} 16^{2^{w(i)}}2^{2^{w(j)}}.\] Let $A:=\{v(i)\colon i\leq |v|\}=\{w(j)\colon j\leq |w|\}\subseteq\N$ be the set that $v$ and $w$ enumerate and let $a\in A$ be the largest element - in the natural order on $\N$ - whose ordering within $v$ and $w$ differs. That is, there are $i_0\leq |v|$ and $j_0\leq |w|$ such that $a=v(i_0)=w(j_0)$ and the sets $A_v:=\{v(i)\colon i<i_0\}\neq \{w(j)\colon j<j_0\}=:A_w$, and moreover $a$ is the largest element with this property. We can write \[s_v(3)-s_w(3)=\sum_{b\in A} 16^{2^b}c(b),\] where for each $b\in A$, $c(b)\in\Z$. Notice that $c(a)\neq 0$. Indeed, we have 

\[c(a)=\sum_{c\in A_w} 2^{2^c} - \sum_{b\in A_v} 2^{2^b} \neq 0,\]

where the last inequality follows from $A_v\neq A_w$ and the uniqueness of the corresponding expansion. Further, by the choice of $a$, for every $a'\in A$ such that $a'>a$, we have $c(a')=0$. On the other hand, for every $a'\in A$ such that $a'<a$ we have 

\[
|c(a')|\leq \sum_{i=1}^{a}2^{2^i}<2\cdot2^{2^a}.
\]

Using that $a  \geq 2$ and that $a - 1 < 2^{2^a - 1}$, it follows that 
\[
\begin{split}
|s_v(3)-s_w(3)|
&=\left|16^{2^a}c(a)+
  \sum_{\{a'\in A\colon a'<a\}}16^{2^{a'}}c(a')\right|\\
&\geq16^{2^a}|c(a)|-
  \sum_{\{a'\in A\colon a'<a\}}16^{2^{a'}}|c(a')|\\
&> 16^{2^a} - (a-1) 16^{2^{a-1}} 2^{2^a+1} \\
&= 2^{4 \cdot 2^a} - (a-1) 2^{3 \cdot 2^a + 1} \\
&= 2^{3 \cdot 2^a + 1} (2^{2^a-1} - (a-1)) > 0.
\end{split}
\]

This shows that $s_v\neq s_w$ and finishes the proof of (1).

We show (2). If for some $v\in\FF$ and $i\leq |v|$ there is $w\in\FF$ such that $s_v\cdot s_{v(i)}=s_w$, then in particular $(s_v\cdot s_{v(i)})(1)=s_w(1)$. $(s_v\cdot s_{v(i)})(1)$ contains the term $2\cdot 16^{2^{v(i)}}$, while $s_w(1)$ contains each term $16^{2^{w(j)}}$, $j\leq |w|$, with multiplicity one. This is a contradiction with the uniqueness of the base-$16$ expansion.

Finally, we show (3). Suppose that for some $g\in\Hei(\Z)\setminus\{s_v\colon v\in\FF\}$ there are $n\neq m\in\N$ and $v,w\in\FF$ such that $g\cdot s_n=s_v$ and $g\cdot s_m=s_w$. Thus, $g=s_v\cdot s_n^{-1}=s_w\cdot s_m^{-1}$. Since $s_n^{-1} = (-16^{2^n}, - 2^{2^n}, -16^{2^n} 2^{2^n})$ and $s_m^{-1} = (-16^{2^m}, - 2^{2^m}, -16^{2^m} 2^{2^m})$, we get 
\[\left(\sum_{i\leq |v|} 16^{2^{v(i)}}\right)-16^{2^n}=(s_v\cdot s_n^{-1})(1)=g(1)= (s_w\cdot s_m^{-1})(1)=\left(\sum_{j\leq |w|} 16^{2^{w(j)}}\right)-16^{2^m}.\] By the uniqueness of the base-$16$ expansion and since $n\neq m$ we get that there are $i_0\leq |v|$, resp. $j_0\leq |w|$ such that $v(i_0)=n$, resp. $w(j_0)=m$. Consequently, we get that $A:=\{v(i)\colon i\leq|v|\}\setminus\{n\}=\{w(j)\colon j\leq |w|\}\setminus\{m\}$, in particular $A\cap\{n,m\}=\emptyset$, and we have 
\begin{equation}\label{eq:g(1)}
g(1)=\sum_{a\in A} 16^{2^a}  
\end{equation}
and an analogous argument gives that 
 \begin{equation}\label{eq:g(2)}
g(2)=\sum_{a\in A} 2^{2^a}.  
 \end{equation}
 
 In order to reach a contradiction, we show that there exists an enumeration $t\in\FF$ of $A$ such that $g=s_t$. We have 
 
\begin{equation}\label{eq:g(3)}
\begin{split}
    g(3) & =-\left(\sum_{i=2}^{|v|} \sum_{j<i} 16^{2^{v(i)}}2^{2^{v(j)}}\right)+\sum_{1\leq k\leq |v|; k\neq i_0} 16^{2^n} 2^{2^{v(k)}}\\ &=-\left(\sum_{i=2}^{|w|} \sum_{j<i} 16^{2^{w(i)}}2^{2^{w(j)}}\right)+\sum_{1\leq k\leq |w|; k\neq j_0} 16^{2^m} 2^{2^{w(k)}}. 
\end{split}  
\end{equation}
Let $t\in\FF$ be $v$ with $n=v(i_0)$ omitted, that is, $|t|=|v|-1$, $t(i)=v(i)$, for $i<i_0$, and $t(i)=v(i+1)$ for $i \geq i_0$. Analogously, let $u\in\FF$ be $w$ with $m=w(j_0)$ omitted. Set also $A_v:=\{v(i)\colon i_0<i\leq |v|\}$ and $A_w:=\{w(j)\colon j_0<j\leq |w|\}$.

From \eqref{eq:g(3)} we get \[\begin{split} g(3) &=s_t(3)+\sum_{a\in A_v} (16^{2^n}2^{2^a}-16^{2^a}2^{2^n})\\ &=s_u(3)+\sum_{b\in A_w} (16^{2^m}2^{2^b}-16^{2^b}2^{2^m})\end{split}\]  which implies
\begin{equation}\label{eq:Heisenberg(3)}
 s_t(3)-s_u(3)= \sum_{b\in A_w} (16^{2^m}2^{2^b}-16^{2^b}2^{2^m}) - \sum_{a\in A_v} (16^{2^n}2^{2^a}-16^{2^a}2^{2^n}).
\end{equation}
Let \[D=\{(a,b)\in A^2\colon \exists l<l',k>k'\; \big(a=t(l)=u(k)\wedge b=t(l')=u(k')\big)\}\] be the set of pairs of distinct elements from $A$ on which the orderings $t$ and $u$ differ. It follows that
\begin{equation}\label{eq:almostfinalHeisenberg}
s_t(3)-s_u(3)=\sum_{(c,d)\in D} (16^{2^c}2^{2^d}-16^{2^d} 2^{2^c}),
\end{equation}

so 
\begin{equation}\label{eq:finalHeisenberg}
\sum_{(c,d)\in D} (16^{2^c}2^{2^d}-16^{2^d} 2^{2^c})-\sum_{b\in A_w} (16^{2^m}2^{2^b}-16^{2^b}2^{2^m}) + \sum_{a\in A_v} (16^{2^n}2^{2^a}-16^{2^a}2^{2^n})=0.
\end{equation}

Given $k,l\in\N$, denote by $D(k,l)$ the term $16^{2^k} 2^{2^l}-16^{2^l} 2^{2^k}$, and notice that $D(k,l)=-D(l,k)$ and $D(k,k)=0$.\medskip

\noindent\textit{Subclaim 1a.} For any finite set $P\subseteq \{(k,l)\in\N^2\colon k>l\}$, if $\sum_{(k,l)\in P} \varepsilon_{k,l}\cdot D(k,l)=0$, where $(\varepsilon_{k,l})_{(k,l)\in P}\subseteq \{-1,0,1\}$, then $\varepsilon_{k,l}=0$ for all $(k,l)\in P$.

Suppose for a moment that Subclaim 1a has been proved. Notice that \eqref{eq:finalHeisenberg} can be written in the form as in the statement of Subclaim 1a since each term $D(k,l)$ from \eqref{eq:finalHeisenberg}, where $k<l$ can be replaced with $-D(l,k)$. By Subclaim 1a we therefore obtain that each term $D(k,l)$ is with the zero coefficient, thus both sides of \eqref{eq:almostfinalHeisenberg} are zero. In particular, $s_t(3)-s_u(3)=0$, so $g(3)=s_t(3)=s_u(3)$. Since by \eqref{eq:g(1)} and \eqref{eq:g(2)} we also have $g(1)=s_t(1)=s_u(1)$ and $g(2)=s_t(2)=s_u(2)$, we conclude that $g=s_t=s_u$, contradicting that $g\notin\{s_r\colon r\in\FF\}$. Thus, in order to finish the proof of (3) and therefore also of Claim 1, we need to prove Subclaim 1a.

We prove it now. Denote by $\mathbb{S}$ the sum $\sum_{(k,l)\in P} \varepsilon_{k,l}\cdot D(k,l)$, which by assumption is zero. Suppose on the contrary that not all the coefficients $\varepsilon_{k,l}$ are zero. Let $k_M$ be the largest integer such that for some $l'<k_M$, $(k_M,l')\in P$ and $\varepsilon_{k_M,l'}\neq 0$. Write 
\begin{equation}\label{eq:subclaim1-1}
 \sum_{\{l<k_M\colon (k_M,l)\in P\}} \eps_{k_M,l} D(k_M,l)=16^{2^{k_M}} B-2^{2^{k_M}}C ,  
\end{equation}
where \[B=\sum_{\{l<k_M\colon (k_M,l)\in P\}} \varepsilon_{k_M,l}\cdot 2^{2^l},\quad C=\sum_{\{l<k_M\colon (k_M,l)\in P\}} \varepsilon_{k_M,l}\cdot 16^{2^l}.\] By the uniqueness of the binary expansion, $B\neq 0$.

We also denote by $R$ the `remainder'  \[\mathbb{S} - 16^{2^{k_M}}B + 2^{2^{k_M}}C=\sum_{\{(k,l) \in P \colon k<k_M\}} \varepsilon_{k,l} \cdot D(k,l)=:R.\] Clearly, 

\begin{equation}\label{eq:subclaim1-2}
\begin{split}
|C|
\leq\sum_{\{l<k_M\colon(k_M,l)\in P\}}16^{2^l}
\leq\sum_{l<k_M}16^{2^l}
<2\cdot16^{2^{k_M-1}}=2\cdot2^{2^{k_M+1}}.
\end{split}
\end{equation}

Next, notice that for every $k>l$ we have $0<D(k,l)<16^{2^k}\cdot 2^{2^l}$, thus, 

\begin{equation}\label{eq:subclaim1-3}
|R|\leq \sum_{\{(k,l) \in P\colon k<k_M\}} 16^{2^k}2^{2^l}\leq \left(\sum_{k<k_M} 16^{2^k}\right)\cdot\left(\sum_{l<k_M} 2^{2^l}\right)<2\cdot 2^{2^{k_M+1}}\cdot \left(\sum_{l<k_M} 2^{2^l}\right).
\end{equation}

We show that $|16^{2^{k_M}}B|> |-2^{2^{k_M}}C +R|$. Since $\mathbb{S}=16^{2^{k_M}} B-2^{2^{k_M}}C+R$, this will show that $\mathbb{S}\neq 0$ contrary to the assumption. By \eqref{eq:subclaim1-2} and \eqref{eq:subclaim1-3}, we have \[\begin{split}|-2^{2^{k_M}}C +R| &< 2^{2^{k_M}}\cdot 2\cdot 2^{2^{k_M+1}} + 2\cdot 2^{2^{k_M+1}}\cdot  \left(\sum_{l<k_M} 2^{2^l}\right)\\ &= 2\cdot 2^{2^{k_M+1}}\cdot \left(\sum_{l\leq k_M} 2^{2^l}\right)\leq 4\cdot 2^{2^{k_M+1} + 2^{k_M}}\\ &=2^{3\cdot 2^{k_M}+2\leq 2^{4\cdot 2^{k_M}}=} 16^{2^{k_M}}\leq |16^{2^{k_M}} B|.\end{split}\] This finishes the proof of Claim 1.\medskip

Now let $(N_n)_{n=1}^\infty$ be a sequence of disjoint subsets of $\N$ such that $\N=\bigcup_{n=1}^\infty N_n$, and for each $n\in\N$, $|N_n|=2^{2^n}$. We set $S_n:=\{s_i\colon i\in N_n\}$, for each $n\in\N$, and we let $\mu_n\in\prob(\Hei(\Z))$ be the finitely supported probability measure uniformly distributed on $S_n$, i.e. \[\mu_n(g):=\begin{cases}
    \frac{1}{|S_n|} & g\in S_n,\\
    0 & \text{otherwise}.
\end{cases}\]
Further, for each $n\in\N$ we choose subsets $S_n^+,S^-_n\subseteq S_n$ such that $S_n=S_n^+\cup S_n^-$ and $|S_n^+|=|S_n^-|=2^{2^n-1}$; in particular, $S_n^+\cap S_n^-=\emptyset$. For each $n\in\N$, we let $\nu_n$, resp. $\nu'_n$, be the finitely supported probability measure uniformly distributed on $S_n^+$, resp. on $S_n^-$. Notice that then we have $\mu_n=\frac{\nu_n+\nu'_n}{2}$, for every $n\in\N$.\medskip

\noindent\textbf{Claim 2.} There exists $f\in\ell_\infty(\Hei(\Z))$ satisfying (1) and (2) of Definition~\ref{def:stronglyCD} with respect to $(\mu_n)_n$, $(\nu_n)_n$, and $(\nu'_n)_n$ such that \[f\ast \big(\nu_n^*-(\nu'_n)^*\big)\to 2 \delta_{1_{\Hei(\Z)}},\] where $\delta_{1_{\Hei(\Z)}}$ is the indicator function of the singleton $\{1_{\Hei(\Z)}\}$.

We set $\Ss:=\{s_v\colon v\in\FF\}\subseteq\Hei(\Z)$. Before defining $f\in\ell_\infty(\Hei(\Z))$ as required by the claim, we introduce several auxiliary functions. First, we define the \emph{sign function} $\sigma: \{s_n\colon n\in\N\}\to\{-1,1\}$ as follows. For any $i\in\N$ we set

\[
\sigma(s_i):=\begin{cases}
    +1 & \exists n\in\N\; \nu_n(s_i)>0,\\
    -1 & \exists n\in\N\; \nu'_n(s_i)>0.
\end{cases}
\]

Next, we define a function $f': \Ss\to \R$ by the following formula. We set \[f'(s_v):=\sum_{i=1}^{|v|} \sum_{n=1}^\infty \frac{2^{2^n}}{2^n} \mu_n(s_{v(i)}),\quad v\in\FF.\] Note that only finitely many terms in the above formula are non-zero, so the function is well-defined. Notice that for each $n\in\N$ and $s_i\in S_n$, the definition gives $f'(s_i)=\frac{1}{2^n}$.

Finally, we define $f:\Hei(\Z)\to [-1,1]$ as follows. We set $f(1_{\Hei(\Z)})=0$ and given $g\in\Hei(\Z)\setminus\{1_{\Hei(\Z)}\}$, we define \[f(g):=\begin{cases}
    \sigma(s_{v(1)})\cdot \max\{0,1- f'(s_v)\} & \exists v\in\FF\; g=s_v,\\
    0 & g\notin \Ss.
\end{cases}\]
Notice that $\|f\|_{\infty} = 1$ as $|f(s_i)| = 1 - \frac{1}{2^n}$ for every $s_i \in S_n$.

\noindent\textit{Subclaim 2a.} $f$ is $\frac{1}{2^{n-1}}$-approximately $\mu_n$-harmonic for each $n$, i.e. $f$ satisfies condition (1) of Definition~\ref{def:stronglyCD}.

Fix $n$ and $g\in \Hei(\Z)$. We estimate $|f(g)-\sum_{h\in S_n} \mu_n(h)f(gh)|$. If $g\notin\Ss$, then $f(g)=0$ and by Claim 1 (3) there is at most one element $h\in S_n$ such that $gh\in\Ss$. Thus, \[\left|f(g)-\sum_{h\in S_n} \mu_n(h)f(gh)\right|\leq \frac{1}{2^{2^n}}< \frac{1}{2^n}.\] If $g\in\Ss \setminus \{ 1_{\mathbb{H}_3(\Z)}\}$ and $f(g)=0$, then $f(gh)=0$, for every $h\in S_n$, thus $|f(g)-\sum_{h\in S_n} \mu_n(h)f(gh)|=0$. So the remaining case is if $f(g)\neq 0$, and thus in particular $g\in\Ss$. Then for every $h\in S_n$ either $gh\in\Ss$, or not. In the former case, $|f(g)-f(gh)|\leq \frac{1}{2^n}.$ In the latter case, $f(gh)=0$, which happens only if $g=s_v$ and $h=s_{v(i)}$, for some $v\in\FF$ and $i\leq |v|$. Since $f(g)=f(s_v)\neq 0$, the number of such $h$'s is bounded by $2^n$. Thus we have \[\begin{split} \left|f(g)-\sum_{h\in S_n} \mu_n(h)f(gh)\right| &\leq \sum_{h\in S_n}\mu_n(h)|f(g)-f(gh)|\\ &\leq \frac{2^n}{2^{2^n}}+ \frac{1}{2^n}\leq \frac{1}{2^{n-1}}.\end{split}\]

\noindent\textit{Subclaim 2b.} $f$ satisfies condition (2) of Definition~\ref{def:stronglyCD} and $f\ast \big(\nu_n^*-(\nu'_n)^*\big)\to 2\delta_{1_{\Hei(\Z)}}$.

Fix $n$ and first take any $g\in\Hei(\Z)\setminus\{1_{\Hei(\Z)}\}$. We estimate $|f\ast \big(\nu_n^*-(\nu'_n)^*\big)(g)|$. If $g\notin\Ss$, then again  by Claim 1 (3) there is at most one element $h\in S_n$ such that $gh\in\Ss$. If there is no such $h\in S_n$, then $f\ast \big(\nu_n^*-(\nu'_n)^*\big)(g)=0$, so suppose there is exactly one such $h$ and assume that $\nu_n(h)\neq 0$, the case `$\nu'_n(h)\neq 0$' being similar. Then, \[\left|f\ast \big(\nu_n^*-(\nu'_n)^*\big)(g)\right|=\left|\sum_{k\in S_n^+} \nu_n(k)f(gk)\right|=\left|\nu_n(h)f(gh)\right|\leq \frac{1}{2^{2^n-1}}\leq \frac{1}{2^n}.\]
Next we assume that $g\in\Ss$. There is $v\in\FF$ such that $g=s_v$. Define $S_n^v:=\{s_i\in S_n\colon \exists j\leq |v|\; v(j)=i\}$ and $S_n^{-v}:=S_n\setminus S_n^v$. If $|S_n^v|\geq 2^n$, then $f'(g)=f'(s_v)\geq 1$, so $f(g)=f(s_v)=0$. Then clearly for every $h\in S_n$, $f(gh)=0$, so $f\ast \big(\nu_n^*-(\nu'_n)^*\big)(g)=0$.

So we assume that $|S_n^v|<2^n$. Then we have for every $h\in S_n^v$, by Claim 1 (2), that $f(gh)=0$, and for every $h\in S_n^{-v}$ that $|f(gh)|\leq 1-\frac{1}{2^n}$. Moreover, since $\sum_{h\in S_n} \sigma(h)=0$, we get $\sum_{h\in S_n^v}\sigma(h)=-\sum_{h'\in S_n^{-v}} \sigma(h')$. In particular, since $|S_n^v|<2^n$, we have $|\sum_{h\in S_n^{-v}} \sigma(h)|<2^n$. Therefore, we get 
\[\left|f\ast \big(\nu_n^*-(\nu'_n)^*\big)(g)\right|  =\left| \sum_{h\in S_n^{-v}} \frac{\sigma(h)}{2^{2^n-1}} f(gh)\right| \leq 2^n \left(1-\frac{1}{2^n}\right)\frac{1}{2^{2^n-1}}\leq \frac{2^n}{2^{2^n-1}} \leq \frac{1}{2^{n-1}}.\]

It remains to estimate $|f\ast \big(\nu_n^*-(\nu'_n)^*\big)(1_{\Hei(\Z)})|$. Notice that $|f(h)|=|f(h')|=1-\frac{1}{2^n}$, for each $h,h'\in S_n$, and moreover, for $h\in S_n$, $f(h)>0$ if and only if $h\in S_n^+$. Therefore, we have 
\begin{equation*}
\begin{aligned}
\left|f\ast\big(\nu_n^*-(\nu'_n)^*\big)(1_{\Hei(\Z)})\right|
&=\sum_{h\in S_n^+}\nu_n(h)\left(1-\frac{1}{2^n}\right)
+\sum_{h'\in S_n^-}\nu'_n(h')\left(1-\frac{1}{2^n}\right)\\
&=2\left(1-\frac{1}{2^n}\right).
\end{aligned}
\end{equation*}
Therefore, we got that \[\begin{split} &\lim_{n\to\infty} \left|f\ast \big(\nu_n^*-(\nu'_n)^*\big)(1_{\Hei(\Z)})\right|=2\\ &\lim_{n\to\infty} \left|f\ast \big(\nu_n^*-(\nu'_n)^*\big)(g)\right|=0,\quad g\in\Hei(\Z)\setminus\{1_{\Hei(\Z)}\}.\end{split}\]
This shows that $f\ast \big(\nu_n^*-(\nu'_n)^*\big)\to 2\delta_{1_{\Hei(\Z)}}$. Condition (2) of Definition~\ref{def:stronglyCD} is immediate.\medskip

We conclude that $f$ along with the measures $(\mu_n)_n$ and decompositions $(\mu_n=\frac{\nu_n+\nu'_n}{2})_n$ witnesses that $\Hei(\Z)$ is not positively strongly Choquet-Deny, thus it is not strongly Choquet-Deny.
\end{proof}

The following is a complete classification of finitely generated strongly Choquet-Deny groups.

\begin{theorem}
A finitely generated group is strongly Choquet-Deny if and only if it is virtually abelian.
\end{theorem}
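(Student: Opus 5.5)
The \emph{if} direction is immediate from results already proved. If $G$ is finitely generated and virtually abelian, it contains a finite-index abelian subgroup $A$. By Theorem~\ref{thm:abelianCD}, $A$ is strongly Choquet-Deny, and by Proposition~\ref{prop:closureofCD}~(2) so is $G$.

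For the \emph{only if} direction, let $G$ be finitely generated and strongly Choquet-Deny. By Proposition~\ref{prop:CDversions}, $G$ is Choquet-Deny. By the classification of \cite{FHTV} together with \cite{Ja04}, a finitely generated Choquet-Deny group is virtually nilpotent. So $G$ contains a finite-index subgroup $N$ that is finitely generated, nilpotent and, after passing to a further finite-index subgroup, torsion-free. By Proposition~\ref{prop:closureofCD}~(1), every subgroup of $G$ is again strongly Choquet-Deny. It therefore suffices to prove the following: \emph{every finitely generated torsion-free nilpotent group that is not abelian contains a copy of $\Hei(\Z)$}. Once this holds, Example~\ref{ex:Heisenberg} together with Proposition~\ref{prop:closureofCD}~(1) gives a contradiction, so $N$ is abelian and $G$ is virtually abelian.

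The main point is this group-theoretic lemma. I would argue as follows. Since $N$ is nilpotent and non-abelian, there is some $a\in N$ not in the center $Z(N)$. Consider the upper central series; pick the largest $i$ such that some element outside $Z(N)$ lies in $Z_i(N)$, or more simply take $a\in Z_2(N)\setminus Z(N)$ (this set is nonempty because $N$ is non-abelian and nilpotent). Then there is $b\in N$ with $c:=[a,b]\neq 1$. Since $a\in Z_2(N)$, $c$ lies in $Z(N)$, so $c$ commutes with both $a$ and $b$. Let $H:=\langle a,b\rangle$. Then $H$ is a quotient of $\Hei(\Z)=\langle a,b,c;[a,c],[b,c],[a,b]c^{-1}\rangle$. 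To see it is isomorphic to $\Hei(\Z)$, recall that every element of $\Hei(\Z)$ can be written uniquely as $a^pb^qc^r$. The kernel of the quotient map is a normal subgroup of $\Hei(\Z)$. Any nontrivial normal subgroup of $\Hei(\Z)$ meets the center $\langle c\rangle$ nontrivially, since in a nilpotent group every nontrivial normal subgroup intersects the center nontrivially. So if the map were not injective, some power $c^r$ with $r\neq 0$ would be trivial in $N$. This is impossible because $N$ is torsion-free and $c\neq 1$. Hence $H\cong\Hei(\Z)$.

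I expect the only delicate point to be bookkeeping: the reduction to a torsion-free finite-index subgroup and the citation of the classification from \cite{FHTV,Ja04}. For the torsion-free step, I would use the standard fact that finitely generated nilpotent groups are virtually torsion-free (see \cite{DruKabook}). Alternatively, torsion-freeness can be avoided: $c$ has infinite order whenever $[a^k,b]=c^k$ is nontrivial for all $k$, and one can arrange this by choosing $a$ of infinite order modulo the torsion subgroup. The torsion-free route is cleaner.
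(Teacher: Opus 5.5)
Your proof is correct and follows the paper's route: Theorem~\ref{thm:abelianCD} with Proposition~\ref{prop:closureofCD}~(2) for the easy direction, and Proposition~\ref{prop:CDversions}, \cite{FHTV}, an embedded copy of $\Hei(\Z)$, Example~\ref{ex:Heisenberg} and Proposition~\ref{prop:closureofCD}~(1) for the converse. The only difference is that the paper cites the proof of \cite[Theorem 12]{HoReRoTh05} for the embedding of $\Hei(\Z)$, whereas you prove it directly and validly (pass to a torsion-free finite-index subgroup, take $a\in Z_2(N)\setminus Z(N)$, and use that every nontrivial normal subgroup of $\Hei(\Z)$ meets its center $\langle c\rangle$). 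Your closing aside about avoiding torsion-freeness is not needed and is not right as stated: choosing $a$ of infinite order modulo torsion does not control the order of $[a,b]$ (e.g.\ in $\Hei(\Z)\times\Hei(\Z_p)$ take $a=(z,x)$ with $z\neq 1$ central and $x$ non-central, and $b=(1,y)$ with $[x,y]\neq 1$), so keep the torsion-free reduction.
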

\begin{proof}
By either Corollary~\ref{cor:cocompactabelian} or \Cref{thm:abelianCD} combined with Proposition~\ref{prop:closureofCD}.(2), we get that virtually abelian groups are strongly Choquet-Deny (even when they are not finitely generated). So we focus on the converse.

First, by \cite[Theorem 1]{FHTV}, if a finitely generated group $G$ is not virtually nilpotent, then it is not Choquet-Deny. By Proposition~\ref{prop:CDversions}, such $G$ is not strongly Choquet-Deny.

Therefore, we only need to treat finitely generated virtually nilpotent groups that are not virtually abelian. Fix such a group $G$. We claim that $G$ contains $\Hei(\Z)$ as a subgroup. Indeed, first $G$ contains a nilpotent subgroup of finite index, which is therefore still finitely generated and not virtually abelian. By the proof of \cite[Theorem 12]{HoReRoTh05}, such a subgroup contains $\Hei(\Z)$ as a subgroup. By Example~\ref{ex:Heisenberg}, $\Hei(\Z)$ is not strongly Choquet-Deny, and by Proposition~\ref{prop:closureofCD}, the strongly Choquet-Deny property passes to subgroups, so its negation passes to overgroups. Therefore, $G$ is not strongly Choquet-Deny. This finishes the proof.
\end{proof}

\appendix
\section{Invariant strictly convex renormings of $c$}

In this appendix, we show that the lamplighter group admits an action on $c$ by linear isometries which does not admit an equivalent invariant strictly convex norm. As a consequence, there is no strictly convex renorming on $c$ which is invariant under the canonical action of $\Iso(c)$. This corrects \cite[Theorem 4.8]{Do26} and answers a question from \cite{AFGR19}.

\begin{theorem} \label{theorem:lamplighter-c}
Let $\theta:\Z\rightarrow\N$ be a bijection. Define $R,U\in\Iso(c)$ by
\begin{equation*}
    (Rx)_{\theta(k)}:=x_{\theta(k-1)}
\end{equation*}
for every $k\in\Z$ and
\begin{equation*}
    (Ux)_{\theta(k)}:=
    \begin{cases}
        -x_{\theta(0)}, & \mbox{if } k=0,\\
        x_{\theta(k)}, & \mbox{if } k\not=0,
    \end{cases}
\end{equation*}
for every $k\in\Z$. Let
\begin{equation*}
    G:=\langle R,U\rangle\leq\Iso(c).
\end{equation*}
Then, $G$ is isomorphic to the lamplighter group and the canonical action of $G$ on $c$ admits no equivalent $G$-invariant strictly convex norm.
\end{theorem}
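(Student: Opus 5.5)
By Theorem~\ref{thm:generalequiv} it suffices to exhibit a single $x\in c$ that is not an extreme point of $\overline{\co(\orb_G(x,-x))}$. Before that we identify $G$ with the lamplighter group. Throughout, we index coordinates by $\Z$ via $\theta$ and write $x_k$ for $x_{\theta(k)}$. Then $R$ is the shift by one. For $k\in\Z$, the isometry $U_k:=R^kUR^{-k}$ flips the sign of coordinate $k$.

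\textbf{Step 1: $G$ is the lamplighter group.} The flips $U_k$ are commuting involutions. They generate a copy of $\bigoplus_{\Z}\Z_2$: a finite product $\prod_{k\in F}U_k$ flips exactly the coordinates in $F$, so it is trivial only if $F=\emptyset$. Conjugation by $R$ sends $U_k$ to $U_{k+1}$. The shift $R$ has infinite order, and $\langle R\rangle$ meets the flip subgroup trivially, since a nontrivial power of $R$ moves coordinates while every flip fixes them. Hence every element of $G$ is uniquely of the form $\big(\prod_{k\in F}U_k\big)R^m$, and $G\cong\Z_2\wr\Z$. We also check that $R$ and $U$ map $c$ into $c$. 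This needs care with the bijection $\theta$: convergence along $\N$ is the same as convergence of $x_k$ as $|k|\to\infty$, and that condition is preserved by shifts and finite sign changes.

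\textbf{Step 2: a non-extreme point.} Let $x\in c$ be given by $x_0=0$ and $x_k=1$ for $k\neq0$; its limit is $1$. Then $R^mx$ has its unique zero at coordinate $m$ and $1$ elsewhere. For $m\neq0$, the vector $U_0R^mx$ additionally has $-1$ at coordinate $0$. For $N\in\N$ let $W_N:=\{N+1,\dots,2N\}$ and set
\[
z_N:=\frac1N\sum_{m\in W_N}R^mx,\qquad y_N:=\frac1N\sum_{m\in W_N}U_0R^mx .
\]
Both are convex combinations of orbit elements. Each coordinate $m\in W_N$ of $z_N$ equals $\frac{N-1}{N}$, and every other coordinate equals $1$. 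Hence $\|z_N-\mathbf 1\|_\infty=\frac1N$, where $\mathbf 1$ is the constant-one sequence. Similarly, $y_N\to y$ in norm, where $y_0=-1$ and $y_k=1$ for $k\neq0$.

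Therefore $y,\mathbf 1\in\overline{\co(\orb(x))}\subseteq\overline{\co(\orb(x,-x))}$. Moreover $y\neq\mathbf 1$ and $\frac{y+\mathbf 1}{2}=x$. So $x$ is not an extreme point, condition (2) of Theorem~\ref{thm:generalequiv} fails, and $c$ with this $G$-action admits no equivalent $G$-invariant strictly convex norm.

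\textbf{Main obstacle.} The key difficulty is finding a witness $x$ for Step 2. Natural first candidates are extreme in their orbit hulls. For instance, $\mathbf 1$ is extreme in the hull of its orbit, which is contained in $[-1,1]^{\N}$, and $e_{\theta(0)}$ is extreme in an $\ell_1$-type ball. What makes the construction work is combining the two mechanisms of the lamplighter group. Averaging shifts over longer and longer windows spreads the single zero of $x$ thinly, so its contribution vanishes in the sup norm. A fixed lamp flip then survives this averaging and produces the two distinct endpoints $y$ and $\mathbf 1$. The remaining verifications — that $G$ preserves $c$ and the explicit norm estimates — are routine.
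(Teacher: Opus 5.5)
Your proof is correct and essentially the paper's: your witness $x=\mathbf 1-e_{\theta(0)}$ is the paper's $z=\frac{\mathbf 1+U\mathbf 1}{2}$, and like the paper you average shifts $R^m$ of it to approximate $\mathbf 1$ in norm. The only difference is packaging: the paper argues directly that any $G$-invariant norm satisfies $\nn{\mathbf 1}\le\nn{z}\le\nn{\mathbf 1}=\nn{U\mathbf 1}$, whereas you invoke the easy implication (1)$\Rightarrow$(2) of Theorem~\ref{thm:generalequiv}, which makes you additionally place $U\mathbf 1$ in the closed convex hull via $y_N$.
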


\begin{proof} The operator $R$ permutes the coordinates of every element of $c$, while $U$ changes the sign of only the coordinate indexed by $\theta(0)$. Therefore, $R,U\in\Iso(c)$. For every $k\in\Z$, define $U_k:=R^kUR^{-k}$. The operator $U_k$ changes the sign of precisely the coordinate indexed by $\theta(k)$. Consequently, $U_k^2=\id$ for every $k\in\Z$ and $U_jU_k=U_kU_j$ for every $j,k\in\Z$. Moreover, a finite product of distinct operators $U_k$ is the identity if and only if the product is empty. Thus, $N:=\langle U_k:k\in\Z\rangle$ is isomorphic to the direct sum $\bigoplus_{k\in\Z}\Z_2$. Furthermore, $RU_kR^{-1}=U_{k+1}$ for every $k\in\Z$. Hence, $N$ is normal in $G$ and the quotient $G/N$ is generated by the coset $RN$. Since no non-zero power of $R$ belongs to $N$, the quotient $G/N$ is isomorphic to $\Z$. Therefore,
\begin{equation*}
    G\cong\left(\bigoplus_{k\in\Z}\Z_2\right)\rtimes\Z,
\end{equation*}
which is the lamplighter group.

We now prove that the canonical action of $G$ on $c$ admits no equivalent $G$-invariant strictly convex norm. Let $x:=\mathbf{1}=(1,1,1,\ldots)$  and set $y:=Ux=\mathbf{1}-2e_{\theta(0)}$, where $(e_n)_{n\in\N}$ denotes the canonical unit vectors. In particular, $x\not=y$. Set $z:=\frac{x+y}{2}=\mathbf{1}-e_{\theta(0)}$. 
Suppose that $\nn{\cdot}$ is an equivalent $G$-invariant norm on $c$. Since $y=Ux$ and $U\in G$, we have $\nn{y}=\nn{x}$. Moreover, for every $k\in\Z$, $R^kz=\mathbf{1}-e_{\theta(k)}$, and, by $G$-invariance, $\nn{R^kz}=\nn{z}$. For every $m\in\N$, define
\begin{equation*}
    z_m:=\frac{1}{m}\sum_{k=0}^{m-1}R^kz
    =\mathbf{1}-\frac{1}{m}\sum_{k=0}^{m-1}e_{\theta(k)}.
\end{equation*}
We have $\|z_m-x\|_{\infty}=\frac{1}{m}$ for every $m\in\N$. Therefore, $z_m\rightarrow x$ in the canonical norm of $c$. Since $\nn{\cdot}$ is equivalent to the canonical norm, $\nn{z_m}\longrightarrow\nn{x}$. On the other hand, by convexity and $G$-invariance,
\begin{equation*}
    \nn{z_m}\leq\frac{1}{m}\sum_{k=0}^{m-1}\nn{R^kz}=\nn{z}
\end{equation*}
for every $m\in\N$. Passing to the limit, we obtain $\nn{x}\leq\nn{z}$. Since $z=\frac{x+y}{2}$ and $\nn{x}=\nn{y}$, convexity also gives
\begin{equation*}
    \nn{z}=\nn{\frac{x+y}{2}}\leq\frac{\nn{x}+\nn{y}}{2}=\nn{x}.
\end{equation*}
Therefore, $\nn{x}=\nn{y}=\nn{\frac{x+y}{2}}$. Since $x\not=y$, the norm $\nn{\cdot}$ cannot be strictly convex.
\end{proof}

In \cite[Theorem 4.8]{Do26} it is claimed that the Banach space $c$ satisfies the GSCR (a $G$-Banach space satisfies the group-invariant strictly convex renormable property (GSCR, for short) if for every group $G$ and every continuous action $G \curvearrowright X$ by linear isometries, there exists a $G$-invariant strictly convex renorming of $X$). However, there is a mistake in its proof. The map $J:c\longrightarrow(c_0,\nn{\cdot})\oplus_{\ell_2}\R$ defined by
\begin{equation*}
    J((x_n)_{n\in\N})
    :=
    \left((x_n-\lim_{n\rightarrow\infty}x_n)_{n\in\N}\right)
    \oplus
    \lim_{n\rightarrow\infty}x_n
\end{equation*}
is not $\Iso(c)$-equivariant as claimed there. Indeed, consider the constant sequence $x:=\mathbf{1}=(1,1,1,\ldots)$ and let $U\in\Iso(c)$ be defined by 
\begin{equation*} 
U(x_1,x_2,x_3,\ldots):=(-x_1,x_2,x_3,\ldots). 
\end{equation*}
Then, $J(x)=0\oplus1$ and, under the action described in the proof of \cite[Theorem 4.8]{Do26}, $U\cdot J(x)=0\oplus1$. On the other hand,
 $Ux=(-1,1,1,\ldots)$ and therefore
\begin{equation*}
    J(Ux)=(-2,0,0,\ldots)\oplus1\not=U\cdot J(x).
\end{equation*}
Theorem \ref{theorem:lamplighter-c} provides the following result, which in turn gives a negative answer to \cite[Question 8.1]{AFGR19}.

\begin{corollary} \label{c-no-equivalent-SC-renorming}
There is no strictly convex renorming on $c$ which is invariant under the canonical action of $\Iso(c)$.
\end{corollary}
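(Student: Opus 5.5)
The plan is to argue by contradiction. Suppose $\vertiii{\cdot}$ were an equivalent strictly convex norm on $c$ that is invariant under the canonical action of $\Iso(c)$, that is, $\vertiii{Tx}=\vertiii{x}$ for every $T\in\Iso(c)$ and $x\in c$. I would restrict attention to the subgroup $G=\langle R,U\rangle\leq\Iso(c)$ from Theorem~\ref{theorem:lamplighter-c}, with any fixed bijection $\theta:\Z\to\N$.

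Since $G$ is a subgroup of $\Iso(c)$, a norm invariant under all of $\Iso(c)$ is in particular invariant under every element of $G$. So $\vertiii{\cdot}$ would be an equivalent $G$-invariant strictly convex norm for the canonical action of $G$ on $c$.

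Two routine points need checking. First, the action of $G$ on $c$ is continuous, since $G$ carries the subspace topology from the strong operator topology (or simply the discrete topology; invariance of the norm does not depend on the topology). Second, "renorming" means an equivalent norm, exactly as in the theorem. With these in place, the existence of $\vertiii{\cdot}$ contradicts Theorem~\ref{theorem:lamplighter-c}, which says that no such $G$-invariant strictly convex norm exists.

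There is no real obstacle here; the corollary is immediate. The only subtlety is interpretive: one must make sure that "invariant under the canonical action of $\Iso(c)$" indeed means $\vertiii{Tx}=\vertiii{x}$ for every $T\in\Iso(c)$. Under that reading, invariance passes directly to the subgroup $G$, and the argument goes through.
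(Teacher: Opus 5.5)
Your proposal is correct and matches the paper's approach: the paper derives this corollary directly from Theorem~\ref{theorem:lamplighter-c}. An equivalent norm invariant under all of $\Iso(c)$ is in particular invariant under the subgroup $G=\langle R,U\rangle$, and that theorem rules out such a $G$-invariant strictly convex norm (its proof uses only invariance, so the topology on $G$ plays no role).
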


\section*{Declaration on AI use}
The project was initiated when the first named author discovered a gap in \cite[Theorem 4.8]{Do26}, noticed that $c$ does not admit an invariant strictly convex renorming with respect to the full isometry group (this is the content of the appendix). Then, by guided prompting using ChatGPT Plus 5.6, found that the free 2-step nilpotent group on countably many generators and the Heisenberg group admit actions on separable Banach spaces without invariant strictly convex renormings. The current paper builds on that AI output, which we make available at \cite{AI} and pushes it much further. All the other results were solely developed by the authors, including isolating the main notion of the paper, relating it to Choquet-Deny groups, and obtaining the classification of finitely generated groups with this property. The paper is fully human-written. Example~\ref{ex:Heisenberg} is the part of the paper most influenced by \cite{AI}. Some of the tedious computations from Claim 1 of the proof of Example~\ref{ex:Heisenberg} were AI-assisted.
\bibliographystyle{abbrv}

\bibliography{ref}

\begin{thebibliography}{10}

\bibitem{AFGR19}
L.~Antunes, V.~Ferenczi, S.~Grivaux, and C.~Rosendal.
\newblock Light groups of isomorphisms of {B}anach spaces and invariant {LUR} renormings.
\newblock {\em Pacific J. Math.}, 301(1):31--54, 2019.

\bibitem{AI}
{ChatGPT} output.
\newblock \href{https://drive.google.com/file/d/10_IHTfkyz68qcOtbAar0pp5j1DJypfwV/view?usp=sharing}{Link}, 2026.

\bibitem{ChoDe60}
G.~Choquet and J.~Deny.
\newblock Sur l'{\'e}quation de convolution {{\(\mu = \mu * \sigma\)}}.
\newblock {\em C. R. Acad. Sci., Paris}, 250:799--801, 1960.

\bibitem{Do26}
M.~Doucha.
\newblock Invariant strictly convex renormings.
\newblock {\em J. Funct. Anal.}, 290(11):27, 2026.
\newblock Id/No 111428.

\bibitem{DruKabook}
C.~Dru\c~tu and M.~Kapovich.
\newblock {\em Geometric group theory}, volume~63 of {\em American Mathematical Society Colloquium Publications}.
\newblock American Mathematical Society, Providence, RI, 2018.
\newblock With an appendix by Bogdan Nica.

\bibitem{OETbook}
T.~Eisner, B.~Farkas, M.~Haase, and R.~Nagel.
\newblock {\em Operator theoretic aspects of ergodic theory}, volume 272 of {\em Grad. Texts Math.}
\newblock Cham: Springer, 2015.

\bibitem{Er05}
A.~Erschler.
\newblock Boundary behavior for groups of subexponential growth.
\newblock {\em Ann. Math. (2)}, 160(3):1183--1210, 2005.

\bibitem{FR11}
V.~Ferenczi and C.~Rosendal.
\newblock Displaying {P}olish groups on separable {B}anach spaces.
\newblock {\em Extracta Math.}, 26(2):195--233, 2011.

\bibitem{FR13}
V.~Ferenczi and C.~Rosendal.
\newblock On isometry groups and maximal symmetry.
\newblock {\em Duke Math. J.}, 162(10):1771--1831, 2013.

\bibitem{FHTV}
J.~Frisch, Y.~Hartman, O.~Tamuz, and P.~Vahidi~Ferdowsi.
\newblock Choquet--deny groups and the infinite conjugacy class property.
\newblock {\em Ann. Math. (2)}, 190(1):307--320, 2019.

\bibitem{Fur63}
H.~Furstenberg.
\newblock A {Poisson} formula for semi-simple {Lie} groups.
\newblock {\em Ann. Math. (2)}, 77:335--386, 1963.

\bibitem{renorming-book}
A.~J. Guirao, V.~Montesinos, and V.~Zizler.
\newblock {\em Renormings in {B}anach spaces---a toolbox}, volume~75 of {\em Monografie Matematyczne [Mathematical Monographs]}.
\newblock Birkh\"auser/Springer, Cham, [2022] \copyright 2022.

\bibitem{HoReRoTh05}
D.~F. Holt, S.~Rees, C.~E. R{\"o}ver, and R.~M. Thomas.
\newblock Groups with context-free co-word problem.
\newblock {\em J. Lond. Math. Soc., II. Ser.}, 71(3):643--657, 2005.

\bibitem{Izu02}
M.~Izumi.
\newblock Non-commutative {Poisson} boundaries and compact quantum group actions.
\newblock {\em Adv. Math.}, 169(1):1--57, 2002.

\bibitem{Ja04}
W.~Jaworski.
\newblock Countable amenable identity excluding groups.
\newblock {\em Can. Math. Bull.}, 47(2):215--228, 2004.

\bibitem{JaNe07}
W.~Jaworski and M.~Neufang.
\newblock The {C}hoquet-{D}eny equation in a {B}anach space.
\newblock {\em Canad. J. Math.}, 59(4):795--827, 2007.

\bibitem{Ka00}
V.~A. Kaimanovich.
\newblock The {Poisson} formula for groups with hyperbolic properties.
\newblock {\em Ann. Math. (2)}, 152(3):659--692, 2000.

\bibitem{KaVe83}
V.~A. Kaimanovich and A.~M. Vershik.
\newblock Random walks on discrete groups: {Boundary} and entropy.
\newblock {\em Ann. Probab.}, 11:457--490, 1983.

\bibitem{Lal-book}
S.~P. Lalley.
\newblock {\em Random walks on infinite groups}, volume 297 of {\em Grad. Texts Math.}
\newblock Cham: Springer, 2023.

\bibitem{Wi90}
G.~A. Willis.
\newblock Probability measures on groups and some related ideals in group algebras.
\newblock {\em J. Funct. Anal.}, 92(1):202--263, 1990.

\bibitem{Woe-book}
W.~Woess.
\newblock {\em Random walks on infinite graphs and groups}, volume 138 of {\em Camb. Tracts Math.}
\newblock Cambridge: Cambridge University Press, 2000.

\bibitem{Yad-book}
A.~Yadin.
\newblock {\em Harmonic functions and random walks on groups}, volume 213 of {\em Camb. Stud. Adv. Math.}
\newblock Cambridge: Cambridge University Press, 2024.

\bibitem{Zim78}
R.~J. Zimmer.
\newblock Amenable ergodic group actions and an application to {Poisson} boundaries of random walks.
\newblock {\em J. Funct. Anal.}, 27:350--372, 1978.

\end{thebibliography}
\end{document}